\newtheorem{thm}{Theorem}
\newtheorem{lem}{Lemma}
\newtheorem{prop}{Proposition}
\theoremstyle{remark}
\newcommand{\CLT}{\textbf{CLT} }
\title[Some counterexamples to the central limit theorem...]{Some counterexamples to the central limit theorem for random rotations}
\author{Klaudiusz Czudek}
\address{Klaudiusz Czudek, Nicolaus Copernicus University,
Chopin Street 12/18,  87-100 Toru\'n, Poland}
\email{klaudiusz.czudek@gmail.com}
\subjclass[2020]{ Primary 37A50, 60F05.}
\keywords{random rotations, central limit theorem}
\begin{document}

\begin{abstract}
Fix an irrational number $\alpha$, and consider a random walk on the circle in which at each step one moves to $x+\alpha$ or $x-\alpha$ with probabilities $1/2, 1/2$ provided the current position is $x$. If an observable is given we can study a process called an additive functional of this random walk. One can formulate certain relations between the regularity of the observable and the Diophantine properties of $\alpha$ implying the central limit theorem. It is proven here that for every Liouville angle there exists a smooth observable such that the central limit theorem fails. We construct also a Liouville angle such that the central limit theorem fails with some analytic observable. For Diophantine angles some counterexample is given as well. An interesting question remained open. 
\end{abstract}

\maketitle

\section{Introduction}
Fix $\alpha\in \mathbb R$, and consider a Markov process $(Y_n^\alpha)_{n\ge 1}$ defined on some probability space $(\Omega, \mathcal F, \mathbb P)$ with the evolution governed by the transition kernel
\begin{equation}\label{E:1.1}
p(x, \cdot ) = \frac 1 2 \delta_{x+\alpha} + \frac 12 \delta_{x-\alpha}, \quad p : \mathbb S^1 \times \mathcal B (\mathbb S^1 ) \to [0,1],
\end{equation}
whose initial distribution, i.e. the distribution of $Y_1^\alpha$, is the Lebesgue measure (here  $\mathcal B (\mathbb S^1 )$ stands for the $\sigma$-algebra of Borel subsets of $\mathbb S^1$). One can easily verify the process is stationary. More work is needed to show the Lebesgue measure is the unique possible choice for the law of $Y_1^\alpha$ to make the process stationary (see e.g.  Theorem 7 and Remark 8 in \cite{Szarek_Zdunik_2016b}). In particular $(Y_n^\alpha)$ is ergodic, which means that if $A\in \mathcal B (\mathbb S^1 )$ is such that $p(x, A)=1$ for Lebesgue a.e. $x\in \mathbb S^1$ then $A$ is of the Lebesgue measure $0$ or $1$ (see e.g. Section 5 in \cite{Hairer_2006}, page 37, for characterizations of ergodicity and the relation to the notion of ergodicity in dynamical systems).



This paper is devoted to the central limit theorem (\textbf{CLT} for short) for additive functionals of $(Y_n^\alpha)$, i.e. processes of the form $\big(\varphi(Y^\alpha_1)+\cdots+\varphi(Y^\alpha_n)\big)$,
where a function $\varphi : \mathbb S^1 \to \mathbb R$ is usually called an observable. For convenience we assume that $\int \varphi(x)dx=0$. We say that \CLT holds for the process if
$$\frac{\varphi(Y^\alpha_1)+\cdots+\varphi(Y^\alpha_n)}{\sqrt n} \Rightarrow \mathcal N (0, \sigma) \quad \textrm{as $n\to \infty$}$$
for some $\sigma>0$. The validity of \CLT depends on Diophantine properties of $\alpha$. An angle $\alpha$ is called Diophantine of type $(c,\gamma)$, $c>0$, $\gamma\ge 2$ if
\begin{equation}\label{diophantine}
\bigg|\alpha - \frac p q \bigg| \ge \frac{c}{q^\gamma} \quad \textrm{for all $p, q\in \mathbb Z$, $q\not=0$.}
\end{equation}
An angle $\alpha$ is Liouville if it is not Diophantine of type $(c,\gamma)$ for any choice of $c>0$, $\gamma \ge 2$.

These and similar processes has been widely studied in the literature.
\begin{itemize}
\item Kesten \cite{Kesten_1960, Kesten_1961} investigated the limit distribution of
$$D_N(x,\alpha)=\sum_{n=0}^{N-1} \varphi(x+n\alpha) - N\int_{\mathbb{S}^1}\varphi(x)dx,$$
where $\varphi$ is the characteristic function of some interval and $(x,\alpha)$ is uniformly distributed in $\mathbb S^1\times \mathbb S^1$. This was later generalized to higher dimensions by Dolgopyat and Fayad \cite{Dolgopyat_Fayad_2014, Dolgopyat_Fayad_2020}.
\item Sinai and Ulcigrai \cite{Sinai_Ulcigrai_2008} considered a similar problem when $\varphi$ is non-integrable meromorphic function.
\item In the above examples a point in the space is chosen randomly thus one calls it a spatial \CLT. One can also fix a point in the space $x\in \mathbb S^1$, an angle $\alpha$ and, given $N$, pick randomly an integer number $n\in [1, N]$. The question arise what is the limit distribution of $D_n(x,\alpha)$ as $N$ is growing. This kind of limit theorems are called temporal. The first limit theorem in this flavour was proven by Beck \cite{Beck_2010, Beck_2011}. For further development see e.g. \cite{Dolgopyat_Sarig_2017}, \cite{Bromberg_Ulcigrai_2018}, \cite{Dolgopyat_Sarig_2020}.
\item Sinai \cite{Sinai_1999} considered a situation where one draws $+\alpha$ or $-\alpha$ with a probability distribution depending on the position in the circle (the method was to study a related random walk in random environment). He proved the unique ergodicity and stability of the process when $\alpha$ is Diophantine. Recently Dolgopyat et. al. \cite{Dolgopyat_Fayad_Saprykina_2021} studied the behaviour in the Liouvillean case.
\item Borda  \cite{Borda_2021} considered even a more general situation where several angles are given and one chooses one of them randomly. Given $p\in (0,1]$, he formulated certain Diophantine conditions implying \CLT for all $\varphi$ in the class of $p$-H{\"o}lder functions. Thus the author was concerned about what assumptions one should put on the angles of rotation to imply \CLT for all observables in a given class.
\end{itemize}

The situation here resembles the one from the last point, but here we rather touch the question how regular an observable should be to imply \CLT if $\alpha$ is given. Namely, using celebrated result by Kipnis and Varadhan \cite{Kipnis_Varadhan_1986} we prove the following statement.

\begin{prop}\label{P:1}
Let us assume $\alpha$ to be Diophantine of type $(c,\gamma)$, $\gamma \ge 2$. If a non-constant function $\varphi \in C^{r}$, $r>\gamma-1/2$ (possibly $r=\infty$), is such that $\int\varphi(x)dx=0$ then there exists $\sigma>0$ such that
$$\frac{\varphi(Y^\alpha_1)+\cdots+\varphi(Y^\alpha_n)}{\sqrt{n}} \Rightarrow \mathcal N (0, \sigma).$$
In particular, \CLT holds if $\alpha$ is Diophantine of an arbitrary type and $\varphi$ is smooth.
\end{prop}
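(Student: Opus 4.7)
The plan is to apply the Kipnis--Varadhan CLT for stationary reversible Markov chains, reducing its hypothesis to an explicit spectral sum that is controlled jointly by the Diophantine condition and the Fourier decay of $\varphi$. First I would note that the transition operator $P\varphi(x)=\tfrac{1}{2}\varphi(x+\alpha)+\tfrac{1}{2}\varphi(x-\alpha)$ is self-adjoint on $L^2(\mathbb S^1,dx)$: a one-line change of variables yields $(Pf,g)_{L^2}=(f,Pg)_{L^2}$. Hence $(Y_n^\alpha)$ is reversible with respect to Lebesgue measure and the Kipnis--Varadhan theorem is applicable. The decisive observation is that $P$ is simultaneously diagonalized by the Fourier basis $e^{2\pi i k x}$, with eigenvalues $\cos(2\pi k\alpha)$; writing $\varphi(x)=\sum_{k\ne 0}\hat\varphi(k)e^{2\pi i kx}$ (the zero-mean hypothesis giving $\hat\varphi(0)=0$), the Kipnis--Varadhan spectral criterion becomes
\[
\sum_{k\ne 0}\frac{|\hat\varphi(k)|^2}{1-\cos(2\pi k\alpha)}<\infty,
\]
and its verification delivers both the CLT and the closed-form asymptotic variance $\sigma^2=\sum_{k\ne 0}|\hat\varphi(k)|^2\,\frac{1+\cos(2\pi k\alpha)}{1-\cos(2\pi k\alpha)}$, obtained by summing the geometric series $\sum_{n\ge 0}(\varphi,P^n\varphi)$ computed in the Fourier basis.

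Next I would estimate the two factors term by term. On the denominator side, the Diophantine condition \eqref{diophantine} gives $\|k\alpha\|_{\mathbb Z}\ge c|k|^{1-\gamma}$, whence $1-\cos(2\pi k\alpha)=2\sin^2(\pi k\alpha)\ge c'|k|^{2-2\gamma}$. On the numerator side, $\varphi\in C^r$ yields the pointwise Fourier decay $|\hat\varphi(k)|\le C|k|^{-r}$: by iterated integration by parts when $r$ is an integer, and by the classical decay bound for Fourier coefficients of Hölder functions (applied to the top derivative $\varphi^{(\lfloor r\rfloor)}$) for fractional $r$. Substituting, each summand above is of order $|k|^{2\gamma-2-2r}$, and the series converges over $k\ne 0$ precisely when $2r-2\gamma+2>1$, i.e.\ $r>\gamma-1/2$, which is the stated hypothesis.

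Finally, strict positivity $\sigma>0$ is immediate from the closed-form variance: since $\alpha$ is irrational, $\cos(2\pi k\alpha)\in(-1,1)$ for every $k\ne 0$, so every summand is non-negative, and $\varphi$ being non-constant with mean zero forces at least one $\hat\varphi(k)$ with $k\ne 0$ to be non-zero, contributing a strictly positive term. The one place I expect to need care is the pointwise Fourier decay for non-integer $r$: the naive integration-by-parts route is unavailable and must be replaced by the classical Hölder-coefficient estimate, as this is exactly what is needed to recover the exponent $|k|^{-r}$ that makes the thresholds line up to give the sharp $\gamma-1/2$ in the hypothesis.
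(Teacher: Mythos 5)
Your proposal is correct and follows the same route as the paper: invoke the Kipnis--Varadhan CLT for reversible stationary chains, reduce its hypothesis to the spectral sum $\sum_{k\ne 0}|\hat\varphi(k)|^2/(1-\cos(2\pi k\alpha))$, lower-bound the denominator via the Diophantine inequality $1-\cos(2\pi k\alpha)\gtrsim |k|^{2-2\gamma}$, upper-bound the numerator via $|\hat\varphi(k)|\lesssim |k|^{-r}$, and observe convergence exactly when $r>\gamma-1/2$. The only additions you make beyond the paper's argument are bookkeeping: the geometric-series derivation of $\sigma^2$ and the remark about Fourier decay for fractional $r$, both of which are consistent with what the paper leaves implicit.
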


\noindent The result is included for the sake of completeness, not because of novelty. This (or slightly different) statement has been proven independently by several people using various methods related to harmonic analysis (section 8 in \cite{Derriennic_Lin_2001}, section 7.5 in \cite{Weber_2009}, \cite{Zdunik_2017}, \cite{Borda_2021}).

By Proposition \ref{P:1} \textbf{CLT} holds if $\varphi$ is smooth and $\alpha$ is Diophantine of an arbitrary type. It is natural to ask then if for every Liouville $\alpha$ there exists a smooth $\varphi$ for which \CLT fails. It is also natural to ask if \CLT fails if analytic observables are considered. This leads us to the following theorems showing dichotomy between the behaviour of Liouville and Diophantine random rotation, similar to the one appearing in smooth conjugacy results for circle diffeomorphisms (see the beginning of Chapter I.3 in \cite{deMelo_vanStrien_1993}).


\begin{thm}\label{T:1}
There exists an irrational $\alpha$ and $\varphi \in C^\omega(\mathbb S^1)$ such that \CLT fails.
\end{thm}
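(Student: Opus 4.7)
\medskip
\noindent The plan is to diagonalise the transition operator $P$ of \eqref{E:1.1} by Fourier series: $P e_k = \cos(2\pi k\alpha)\,e_k$ with $e_k(x) = e^{2\pi i kx}$. I realise the chain as $Y_n^\alpha = Y + \alpha\,\xi_n$, where $Y$ is uniform on $\mathbb S^1$ and $\xi_n = \varepsilon_1 + \cdots + \varepsilon_{n-1}$ is an independent simple random walk on $\mathbb Z$. I then pick a real lacunary observable $\varphi(x) = \sum_{m\ge 1}b_m\cos(2\pi q_m x)$ with $q_m\in\mathbb N$ strictly increasing and $b_m := e^{-q_m}$, ensuring $\varphi \in C^\omega(\mathbb S^1)$; the partial sum splits as
\begin{equation*}
S_n = \sum_{m}b_m\Bigl[\cos(2\pi q_m Y)\sum_{j=1}^n\cos(2\pi q_m\alpha\xi_j) - \sin(2\pi q_m Y)\sum_{j=1}^n\sin(2\pi q_m\alpha\xi_j)\Bigr].
\end{equation*}
The aim is to arrange $\alpha$ and $(q_m)$ so that, along a subsequence $n_m\to\infty$, the $m$-th mode yields a diverging amplitude in front of $\cos(2\pi q_m Y)$ inside $S_{n_m}/\sqrt{n_m}$, destroying tightness.

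I would build $\alpha$ and $(q_m)$ jointly by induction, taking the $q_m$ to be the convergent denominators of the continued fraction $\alpha = [a_1;a_2,\ldots]$. Given $q_1,\ldots,q_m$, I pick $a_{m+1}$ as the smallest integer with $q_{m+1} := a_{m+1}q_m + q_{m-1}\ge e^{3q_m}$; the resulting $\alpha$ is Liouville and the standard continued fraction bounds give $(2q_m q_{m+1})^{-1}\le \|q_m\alpha\|\le q_{m+1}^{-1}\le e^{-3q_m}$, with $q_{m+1}\le 2 e^{3q_m}$. Setting $n_m := \lfloor e^{5q_m/2}\rfloor$ makes $n_m\|q_m\alpha\|^2 \le e^{-q_m/2}\to 0$, so the $m$-th Fourier mode sits in the strongly correlated regime on the time scale $n_m$.

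On the likely event $\max_{j\le n_m}|\xi_j|\le 2\sqrt{n_m\log n_m}$, Taylor expanding the inner trigonometric functions and using $\mathrm{Var}(\sum_{j\le n}\xi_j) = \tfrac{n(n-1)(2n-1)}{6}$ gives
\begin{equation*}
\frac{b_m}{\sqrt{n_m}}\Bigl[\cos(2\pi q_m Y)\sum_{j\le n_m}\cos(2\pi q_m\alpha\xi_j) - \sin(2\pi q_m Y)\sum_{j\le n_m}\sin(2\pi q_m\alpha\xi_j)\Bigr] = e^{q_m/4}\cos(2\pi q_m Y)+o_{L^2}(1).
\end{equation*}
For the remaining modes, orthogonality of the Fourier basis under Lebesgue gives additivity of variances: the $L^2$-contribution of the $k$-th mode to $S_{n_m}/\sqrt{n_m}$ is bounded by $b_k\sqrt{g_{n_m}(\cos 2\pi q_k\alpha)/n_m}$, where $g_n(\lambda) = \sum_{i,j\le n}\lambda^{|i-j|}$. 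For $k<m$ one sits in the decorrelated regime $n_m\|q_k\alpha\|^2\gg 1$, so $g_{n_m}(\cdot)\le 4n_m/\|q_k\alpha\|^2\le 16 q_k^2 q_{k+1}^2 n_m$, producing a total bound of order $q_{m-1}e^{2q_{m-1}}$; for $k>m$ the trivial $g_{n_m}\le n_m^2$ combined with $b_k \le e^{-q_{m+1}}\le e^{-e^{3q_m}}$ is more than enough. Both are $o(e^{q_m/4})$. Since $|\cos(2\pi q_m Y)|\ge \tfrac12$ on a set of probability $\tfrac23$, Chebyshev applied to the error yields
\begin{equation*}
\liminf_{m\to\infty}\mathbb P\bigl(|S_{n_m}/\sqrt{n_m}|\ge M\bigr)\ge \tfrac23\qquad\text{for every }M>0,
\end{equation*}
so $(S_n/\sqrt n)$ is not tight and \CLT fails. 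The main obstacle is the balancing act of the inductive step: $q_{m+1}$ must grow fast enough (super-exponentially in $q_m$) to keep the $m$-th mode resonant up to scale $n_m$, while simultaneously controlling the cumulative $L^2$-contribution of the other modes — this is precisely what the growth condition $q_{m+1}\ge e^{3q_m}$ delivers.
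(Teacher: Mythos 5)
Your proposal is correct in substance, and it takes a genuinely different technical route from the paper's proof of Theorem \ref{T:1}. The paper constructs $\alpha$ as a limit of rationals $\alpha_k=p_k/q_k$ and keeps track, at each inductive stage, of explicit probability estimates: Lemma~\ref{L:1} is a resonance lemma based on the invariance of $G^\eta_{q}$ under the rational rotation, while Lemma~\ref{L:2} controls the lower modes by exploiting the exponential mixing of the finite-state Markov chain obtained when the angle is rational. The paper then works with the normalisation $n^s$, $s\in(\tfrac12,1)$, precisely because Lemma~\ref{L:2} uses Chebyshev with a bound of order $n^{1-2s}$, and combines everything into a single inequality $\mathbb P\bigl(S_{N_k}/N_k^s>1\bigr)>\tfrac{1}{12}$. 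You instead use the random walk representation $Y_n=Y+\alpha\xi_n$ with $Y$ uniform and independent of the walk $(\xi_n)$, which diagonalises the whole computation: the exact identity $\mathbb E\,e^{2\pi i q\alpha(\xi_i-\xi_j)}=(\cos 2\pi q\alpha)^{|i-j|}$ gives $\mathbb E\,|S^{(k)}_n|^2=\tfrac12 b_k^2\,g_n(\cos 2\pi q_k\alpha)$, and orthogonality of the modes in the $Y$-variable lets you add these $L^2$ contributions. You then construct $\alpha$ directly through its continued fraction with $q_{m+1}\ge e^{3q_m}$ and read off the resonance $(n_m\|q_m\alpha\|^2\to 0)$ and the decorrelation of lower modes $(n_m\|q_k\alpha\|^2\to\infty$, $k<m)$ from the standard bounds $(2q_kq_{k+1})^{-1}\le\|q_k\alpha\|\le q_{k+1}^{-1}$. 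This yields the stronger conclusion that $|S_{n_m}/\sqrt{n_m}|\to\infty$ on a set of probability $\tfrac23$ under the natural $\sqrt n$ normalisation, where the paper's mixing lemma forces the weaker $n^s$ scaling. What your route buys is a cleaner, spectral computation with quantitative $L^2$ bounds and an explicit continued-fraction description of $\alpha$; what the paper's route buys is a more elementary toolkit (no random-walk representation, no correlation sums) and a uniform inductive scheme that is reused essentially verbatim in the proofs of Theorems~\ref{T:2} and~\ref{T:3}. Two small points worth tightening in a write-up: the $o_{L^2}(1)$ in your displayed approximation should be justified by splitting into the event $\{\max_{j\le n_m}|\xi_j|\le 2\sqrt{n_m\log n_m}\}$ and its complement (the latter has probability $O(n_m^{-2})$ and the summands are bounded, so its $L^2$ contribution is $O(b_m/\sqrt{n_m})$); and you should state explicitly that after summing the modes the total error is $o(e^{q_m/4})$ in $L^2$ rather than $o(1)$ — the argument still closes because you compare against $e^{q_m/4}|\cos(2\pi q_mY)|\ge\tfrac12 e^{q_m/4}$.
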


\noindent Note that by Proposition \ref{P:1} the angle in the assertion must be Liouville.

\begin{thm}\label{T:2}
Let $\alpha$ be an irrational number. Let us assume there exist $c>0$, $\gamma>5$ such that
$$\bigg|\alpha - \frac{p}{q} \bigg| \le \frac{c}{q^\gamma} \quad \textrm{for infinitely many $p,q \in \mathbb Z$, $q\not = 0$.} $$
Let $r$ be the largest positive integer with $r<\frac{\gamma}{2}-\frac 3 2$. Then there exist $\varphi \in C^r$ such that \CLT fails.
\end{thm}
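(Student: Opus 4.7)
The plan is to construct $\varphi$ as a lacunary Fourier series whose frequencies are a suitably sparse subsequence of the denominators $q_j$ coming from the assumed Diophantine approximations, and then to exhibit times $N_j$ along which $S_N/\sqrt{N}$ fails to be tight; non-tightness alone rules out convergence to any Gaussian law.

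Write $\eta_j := q_j\alpha - p_j$, so $|\eta_j|\le c q_j^{-(\gamma-1)}$. After passing to a subsequence we may assume both that $|\eta_j|$ is decreasing and that $q_{j+1}$ is much larger than $q_j$ and than $\eta_j^{-1}$. Since the hypothesis $r<\gamma/2-3/2$ gives $r+1/2<\gamma-1$, I take $s := r+1/2$ and set
\[
\varphi(x) := \sum_{j\ge 1} q_j^{-s}\cos(2\pi q_j x).
\]
The condition $s>r$ together with the lacunarity of $(q_j)$ makes $\sum_j q_j^{\,r-s}$ convergent, so $\varphi\in C^r(\mathbb{S}^1)$.

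For the variance computation I would use $Y_n^\alpha = Y_1^\alpha + \alpha T_{n-1}$ with $T_n = \xi_1+\cdots+\xi_n$ a simple symmetric walk on $\mathbb{Z}$ (iid $\pm 1$ steps $\xi_i$, independent of the uniform $Y_1^\alpha$). Orthogonality of the characters of $Y_1^\alpha$ gives
\[
\mathbb{E}|S_N|^2 = \sum_j |c_{q_j}|^2 \sum_{n,m=1}^N \cos(2\pi q_j\alpha)^{|n-m|},
\]
where $1-\cos(2\pi q_j\alpha)\asymp\eta_j^{\,2}$. Setting $N_j := \lfloor\eta_j^{-2}\rfloor$ and using the elementary bound $\sum_{n,m=1}^N u^{|n-m|}\asymp N^2$ valid for $N\lesssim (1-u)^{-1}$, the resonant mode $q_j$ contributes $\asymp q_j^{-2s}\,N_j^{\,2}$ to $\mathbb{E}|S_{N_j}|^2$, so
\[
\mathbb{E}|S_{N_j}|^2/N_j \;\gtrsim\; q_j^{-2s}\,N_j \;\ge\; c^{-2}\,q_j^{2(\gamma-1-s)} \;\longrightarrow\;\infty.
\]
Sufficient sparsity of $(q_j)$ makes the contributions of all other modes to $\mathbb{E}|S_{N_j}|^2/N_j$ dominated by that of the resonant one.

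The main technical work is upgrading this variance blow-up to genuine failure of \CLT, because boundedness of $\varphi$ alone does not make $(S_N/\sqrt N)^2$ uniformly integrable. I would isolate the resonant contribution
\[
C_j := q_j^{-s}\bigl[\cos(2\pi q_j Y_1^\alpha)\,A_{N_j}^{(j)}-\sin(2\pi q_j Y_1^\alpha)\,B_{N_j}^{(j)}\bigr],
\]
where $A_N^{(j)}:=\sum_{n=1}^N\cos(2\pi\eta_j T_{n-1})$ and $B_N^{(j)}$ analogously, and invoke Donsker's invariance principle at the diffusive scale $N_j\asymp\eta_j^{-2}$ to get
\[
\bigl(A_{N_j}^{(j)}/N_j,\,B_{N_j}^{(j)}/N_j\bigr) \;\Rightarrow\; \Bigl(\int_0^1\cos(2\pi W(t))\,dt,\ \int_0^1\sin(2\pi W(t))\,dt\Bigr)
\]
for a standard Brownian motion $W$, a bounded non-degenerate limit. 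Combined with the independent uniform $2\pi q_j Y_1^\alpha\bmod 2\pi$, this forces $C_j/\sqrt{N_j} = q_j^{\gamma-1-s}\,R_j$ with $R_j$ converging in distribution to a non-degenerate bounded random variable; since $q_j^{\gamma-1-s}\to\infty$, the law of $C_j/\sqrt{N_j}$ is not tight. The remainder $(S_{N_j}-C_j)/\sqrt{N_j}$ is tight by Markov's inequality and the variance bound on the non-resonant modes. Hence $S_{N_j}/\sqrt{N_j}$ cannot converge in distribution to any Gaussian, and \CLT fails. The main obstacle is precisely the last, distributional step: identifying the bounded non-degenerate limit through the diffusive rescaling of the simple random walk, rather than settling for a purely spectral variance estimate.
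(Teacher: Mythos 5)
Your plan takes a genuinely different route from the paper. The paper uses the \emph{short} time scale $N_k \asymp q_k^{\gamma-1}/c$, on which the orbit $x+n\alpha$ stays deterministically near the resonant lattice $\{j/q_k\}$ (Lemma~\ref{L:3}); it shows $\mathbb P\bigl(S_{N_k}/N_k^s > c'\bigr)\ge 1/16$ for some fixed $s\in(1/2,1)$, and concludes because a \CLT at scale $\sqrt n$ would force $S_n/n^s\to 0$ in probability. You instead work on the \emph{diffusive} scale $N_j\asymp\eta_j^{-2}$, invoke Donsker to identify the nondegenerate limit $\int_0^1\cos(2\pi W)$, and aim for non-tightness of $S_{N_j}/\sqrt{N_j}$ directly. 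These are different decompositions, different auxiliary limits (deterministic orbit bound plus Proposition~\ref{P:2} versus a functional CLT for the driving walk), and even different exponent regimes: your scaling would, if carried through, require only $r<\gamma-3/2$ rather than $r<\gamma/2-3/2$.

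However, the step ``the remainder $(S_{N_j}-C_j)/\sqrt{N_j}$ is tight by Markov's inequality and the variance bound on the non-resonant modes'' is not correct as stated. For a fixed earlier mode $i<j$, the time $N_j\asymp\eta_j^{-2}$ is far beyond that mode's own diffusive time $\eta_i^{-2}$, so the geometric sum saturates and
\[
\frac{1}{N_j}\,\mathbb E\Bigl|\sum_{n\le N_j}q_i^{-s}\cos(2\pi q_iY_n^\alpha)\Bigr|^2 \;\asymp\; q_i^{-2s}\eta_i^{-2}\;\ge\; c^{-2}q_i^{2(\gamma-1-s)},
\]
which tends to infinity with $i$. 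By the Fourier orthogonality in $Y_1^\alpha$ these contributions are uncorrelated, so the second moment of your remainder on the $\sqrt{N_j}$ scale is at least $c^{-2}\sum_{i<j}q_i^{2(\gamma-1-s)}\to\infty$; in particular Markov's inequality cannot deliver tightness. Indeed the single mode $i=j-1$ satisfies a CLT (Proposition~\ref{P:2}) and contributes something asymptotically Gaussian with standard deviation $\asymp q_{j-1}^{-s}\eta_{j-1}^{-1}\to\infty$, so the remainder itself is not tight. To close the argument you must instead show that the resonant mode \emph{dominates} the remainder, e.g.\ by choosing the subsequence $(q_j)$ so sparsely that $q_j^{-s}\eta_j^{-1}\gg\sum_{i<j}q_i^{-s}\eta_i^{-1}$ (note $\eta_i$ is fixed once $q_i$ is chosen, so this can be enforced inductively); or adopt the paper's normalization $N^t$ with $t>1/2$, under which each fixed lower mode's contribution $X_i/N_j^t$ has $L^2$ norm $\asymp q_i^{-s}\eta_i^{-1}N_j^{1/2-t}\to 0$ as $j\to\infty$, so that the earlier modes genuinely vanish in probability (this is precisely the role of conditions (\ref{E:6.4})--(\ref{E:6.6}) in the paper). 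You should also record explicitly the sparsity needed to tame the tail modes $i>j$, i.e.\ $q_{j+1}\gg N_j^{1/(2s)}$, as the paper does in (\ref{E:6.5}).
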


\noindent The only reason for making the assumption $\gamma>5$ is to ensure $\frac{\gamma}{2}-\frac 3 2$ greater than $1$, so that the condition $r<\frac{\gamma}{2}-\frac 3 2$ is satisfied for at least one positive integer $r$. A slightly changed proof of Theorem \ref{T:2} yields the following.

\begin{thm}\label{T:3}
Let $\alpha$ be Liouville. Then there exists $\varphi\in C^\infty(\mathbb S^1)$ such that \CLT fails.
\end{thm}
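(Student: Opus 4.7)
The plan is to exploit the Liouville hypothesis to construct $\varphi$ as a sum of Fourier modes indexed by a sparse subsequence of extraordinarily well-approximating denominators of $\alpha$. Let $(q_n)$ be the denominators of the continued-fraction convergents of $\alpha$. By Liouvillianity I extract a subsequence $m_k := q_{n_k}$ with
$$\beta_k := \|m_k\alpha\| \leq m_k^{-k},$$
and with $m_{k+1}$ growing as fast a function of $m_1,\dots,m_k$ as we wish (the precise rate to be fixed during the argument). Set
$$\varphi(x) := \sum_{k\ge 1} a_k\cos(2\pi m_k x), \qquad a_k := \sqrt{\beta_k}.$$
Since $a_k \leq m_k^{-k/2}$, for each fixed $r\ge 0$ the series $\sum_k a_k m_k^r$ converges as long as the $m_k$ grow fast enough, so $\varphi \in C^\infty(\mathbb S^1)$ and $\int\varphi = 0$.

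For the variance analysis, write $Y_i = Y_1 + \alpha W_i$ with $W_i := \epsilon_1+\cdots+\epsilon_{i-1}$, $\epsilon_j$ i.i.d.\ Rademacher, and note that for $U_k := m_k Y_1 \bmod 1 \sim \mathrm{Unif}[0,1)$ and $\theta_k := m_k\alpha - p_{n_k}$ (so $|\theta_k| = \beta_k$),
$$\cos(2\pi m_k Y_i) = \cos(2\pi U_k + 2\pi\theta_k W_i).$$
Uniformity of $Y_1$ makes covariances of distinct-frequency modes vanish (the relevant expectations reduce to $\mathbb E[\cos(2\pi(m_k\pm m_\ell)Y_1)] = 0$), so $\mathrm{Var}(S_n^\varphi) = \sum_k \mathrm{Var}(S_n^{\varphi_k})$, and a direct computation using $\mathbb E[e^{2\pi i t\epsilon}] = \cos(2\pi t)$ gives
$$\mathrm{Var}(S_n^{\varphi_k}) \;\asymp\; a_k^2\min(n^2,\, n/\beta_k^2).$$

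The main step is to exhibit non-tightness of $S_{t_k}^\varphi/\sqrt{t_k}$ along the characteristic times $t_k := \lfloor \beta_k^{-2}\rfloor$. Donsker's invariance principle gives $\beta_k W_{\lfloor s t_k\rfloor} \Rightarrow B_s$ in $C[0,1]$; the continuous-mapping theorem applied to the bounded continuous functional $(U,w)\mapsto \int_0^1 \cos(2\pi U + 2\pi w(s))\,ds$ then yields
$$\sqrt{\beta_k}\cdot \frac{S_{t_k}^{\varphi_k}}{\sqrt{t_k}} \;\Longrightarrow\; F := \int_0^1\cos(2\pi U + 2\pi B_s)\,ds,$$
where $U\sim\mathrm{Unif}[0,1)$ is independent of the standard Brownian motion $B$. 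The limit $F$ is non-degenerate: $\mathbb E[F] = 0$ but $\mathbb E[F^2] = \tfrac12 \int_0^1\!\int_0^1 e^{-2\pi^2|s-t|}\,ds\,dt > 0$. The residual modes contribute at most
$$\frac{1}{t_k}\sum_{j\ne k}\mathrm{Var}(S_{t_k}^{\varphi_j}) \;\lesssim\; \sum_{j<k}\frac{1}{\beta_j} + \frac{1}{\beta_k^2}\sum_{j>k}\beta_j = o(1/\beta_k)$$
provided each $m_{k+1}$ is chosen large enough, so $\sqrt{\beta_k}\cdot S_{t_k}^\varphi/\sqrt{t_k} \Rightarrow F$ as well. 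Since $1/\sqrt{\beta_k}\to\infty$ and $\mathbb P(|F|>\varepsilon) > 0$ for some $\varepsilon > 0$, one concludes $\mathbb P(|S_{t_k}^\varphi/\sqrt{t_k}| > T) \not\to 0$ for every fixed $T$; the family $(S_n^\varphi/\sqrt n)$ is not tight and \CLT fails.

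The main obstacle is this scaling-limit step. One has to pass to a distributional limit inside a Riemann-like sum of cosines over rescaled random-walk values under Donsker's principle, and simultaneously quantify the contamination from the tail Fourier modes $j\ne k$ at the specific time $t_k$ where mode $k$ transitions from its quadratic to its linear variance regime. The Liouville hypothesis is precisely what makes both points controllable: by choosing $\beta_k$ and the ratios $\beta_{k+1}/\beta_k$ arbitrarily small, one decouples the scales so that distinct modes live at disjoint time regimes, and a single mode can be made to dominate with a non-Gaussian scaling limit at its own characteristic time.
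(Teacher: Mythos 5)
Your proof is correct (up to routine care about the Riemann-sum approximation inside the Donsker step and about working in Skorokhod space versus with the interpolated walk), but it takes a genuinely different route from the paper. The paper's Lemma 3 isolates a single mode $\varphi_k(x)=a_k\cos(2\pi q_k x)$, fixes $s\in(\tfrac12,1)$, and exploits a purely deterministic trapping argument: since $\|n\alpha\|<\tfrac{1}{16q_k}$ for all $|n|\le N_k\sim q_k^{\gamma-1}\sim \beta_k^{-1}$, any trajectory started in a fixed positive-measure neighbourhood of $\{j/q_k\}$ stays there for $N_k$ steps, so that $\varphi_k$ is bounded below along the entire orbit and the Birkhoff sum grows linearly; the conclusion is $\mathbb P(S_{N_k}/N_k^s>c)>\tfrac1{16}$, which contradicts \CLT because $S_n/n^s\to 0$ in probability under \CLT when $s>\tfrac12$. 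Your argument instead works at the strictly longer characteristic time $t_k\sim\beta_k^{-2}$ where the random walk $\theta_k W_n$ has moved an $O(1)$ amount, and uses Donsker plus continuous mapping to pin down the exact scaling limit $F=\int_0^1\cos(2\pi U+2\pi B_s)\,ds$, concluding via non-tightness of $S_n/\sqrt n$. The two approaches operate at different time regimes: the paper stops before the walk disperses (an essentially combinatorial estimate, entirely elementary, no invariance principle), while you go all the way to the diffusive scale where a non-Gaussian but explicit limit law emerges. Your route is less elementary but yields strictly more information (the limit distribution along the blow-up times and the precise normalization), and would adapt to settings where a functional \CLT for the driving noise is available; the paper's route avoids the invariance principle and the $L^2$ Slutsky bookkeeping entirely, at the cost of never identifying what $S_{N_k}/\sqrt{N_k}$ actually converges to. Both proofs hinge on the same inductive decoupling of scales through the Liouville hypothesis, and on the same Chebyshev-type suppression of the other Fourier modes (the paper's Lemma 2 versus your $L^2$ estimate on the residuals).
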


Let us end this section with an interesting open problem. An angle $\alpha$ is called badly approximable when it is Diophantine of type $(c, 2)$ for some $c>0$ (for instance, every quadratic irrational is badly approximable). Proposition \ref{P:1} yields if $\varphi$ is $C^2$ then the additive functional satisfies \textbf{CLT}. Unfortunately, Theorem \ref{T:2} does not give any counterexample in that case. This leads to a natural question: does \CLT holds if $\alpha$ is badly approximable (e.g. $\alpha$ is the golden ratio) and $\varphi$ is $C^1$?

\section{The Poisson equation and central limit theorem}\label{S:3}

One of methods of proving \CLT for additive functionals of Markov chains is the Gordin-Lif\v{s}ic method \cite{Gordin_Lifsic_1978}, which is to be roughly explained in present section (note that in \cite{Weber_2009}, \cite{Zdunik_2017}, \cite{Borda_2021} different techniques have been used). Before that let us define the operator
$$
T\varphi(x)=\frac 1 2 \varphi(x+\alpha) + \frac 12 \varphi(x-\alpha), \quad \varphi\in B(\mathbb S^1), \ T: B(\mathbb S^1)\rightarrow B(\mathbb S^1),
$$
where $B(\mathbb S^1)$ is the space of Borel measurable functions. By the very definition of a Markov process, if $(Y_n^\alpha)$ is defined on $(\Omega, \mathcal F, \mathbb P)$ then
\begin{equation}\label{E:dual}
\mathbb E ( \varphi(Y^\alpha_{n+1} ) | Y_n^\alpha ) = \int_{\mathbb{S}^1} p( Y_n^\alpha, dy) \varphi(y) = T\varphi (Y_n^\alpha), \quad n\ge 1,
\end{equation}
where $p$ is the transition function (\ref{E:1.1}).

Let $\varphi : \mathbb S^1 \rightarrow \mathbb R$ be a square integrable function (with respect to the Lebesgue measure) with $\int \varphi(x)dx=0$. To show the convergence of $\frac{1}{\sqrt{n}}(\varphi(Y_1)+\cdots+\varphi(Y_n))$ to the normal distribution we solve so called Poisson equation\footnote{In dynamical systems theory this equation (with $T$ replaced by a Koopman operator) is called a cohomological equation. The name ``Poisson equation'' is more common in theory of stochastic processes, probably due to the fact that writing down the corresponding equation for a Brownian motion, which is a continuous time Markov process, gives $\frac 1 2 \Delta \varphi = \psi$, where $\Delta$ is the Laplace operator. Note $\frac 1 2 \Delta$ is the infinitesimal generator of the Brownian motion.} $T\psi - \psi =\varphi$, where $\psi\in L^2(\mathbb S^1)$ is unknown. If the solution $\psi$ exists then we can write
$$\varphi(Y_1)+\cdots+\varphi(Y_n)$$
\begin{equation}\label{E:P1.1}
=\big[(T\psi(Y_1)-\psi(Y_2))+\cdots+(T\psi(Y_{n-1})-\psi(Y_n))\big]+(T\psi(Y_n) - \psi (Y_1)).
\end{equation}
When divided by $\sqrt{n}$, the second term tends to zero in probability. It is sufficient then to show \CLT for the first process, which is an ergodic, stationary martingale by (\ref{E:dual}). For such processess \CLT is valid (see \cite{Brown_1971}). Thus the assertion follows provided the solution of the Poisson equation exists.

Observe that $(I-T) u_n=(1-\cos(2\pi n \alpha)) u_n$ for $u_n(x)=\exp(2\pi i n x)$, $x\in \mathbb S^1$, $n\in\mathbb Z$. Therefore the trigonometric system $(u_n)_{n\in\mathbb Z}$ is also the orthonormal system of eigenvectors of $I-T$ with corresponding eigenvalues $1-\cos(2\pi n \alpha)$, $n\in\mathbb Z$. We deduce the $n$-th Fourier coefficient of $(I-T)\psi$, $\psi \in L^2(\mathbb S^1)$, is of the form $(1-\cos(2\pi n \alpha))\hat{\psi}(n)$, $n\in \mathbb Z$. This yields a recipe to find $\psi$ when $\varphi$ is given. Namely, $\psi$ should be a square integrable function whose Fourier series coefficient are

\begin{equation}\label{fourier}
\hat{\psi}(n) = \frac{|\hat{\varphi}(n)|}{1-\cos(2\pi\alpha n)}, \quad n\in\mathbb Z \setminus \{0\},
\end{equation}
while $\hat{\psi}(0)$ is an arbitrary real number. Note we use here also the assumption that $\hat{\varphi}(0)=\int\varphi(x)dx=0$. Indeed, $1-\cos(0)=0$ implies that we must have $\hat{\varphi}(0)=0$ to solve the equation. What remains to do is to show the convergence
\begin{equation}\label{condition1}
\sum_{n\in\mathbb Z\setminus \{0\}} \frac{|\hat{\varphi}(n)|^2}{(1-\cos(2\pi\alpha n))^2}<\infty,
\end{equation}
to make sure the object with Fourier coefficients (\ref{fourier}) is indeed a square integrable function.

In fact the solution of the Poisson equation does not have to exists to have \textbf{CLT}. Note the processes under consideration are reversible, which means that the distribution of random vectors $(Y^\alpha_1, \ldots, Y^\alpha_n)$ and $(Y^\alpha_n, \ldots, Y^\alpha_1)$ are the same for every natural $n$ or, equivalently, the operator $T$ is self-adjoint. In celebrated paper \cite{Kipnis_Varadhan_1986} (see Theorem 1.3 therein) the authors have proven the condition $\varphi\in \textrm{Im}(I-T)$ can be relaxed to
\begin{equation}\label{Kipnis_Varadhan}
\varphi \in \textrm{Im}(\sqrt{I-T}),
\end{equation}
where $\sqrt{I-T}$ is the square root of $I-T$ (recall the square root of a positive semidefinite, self-adjoint operator $P$ acting on a Hilbert space is the operator $\sqrt{P}$ with the property $(\sqrt{P})^2=P$). Since the $n$-th Fourier coefficient of the function $(I-T)\psi$, $\psi \in L^2(\mathbb S^1)$ is given by $(1-\cos(2\pi n \alpha))\hat{\psi}(n)$, we easily deduce that $\sqrt{I-T}$ is well defined on $L^2(\mathbb S^1)$ and the $n$-th Fourier coefficient of the function $\sqrt{I-T}\psi$, $\psi \in L^2(\mathbb S^1)$, is given by $\sqrt{1-\cos(2\pi n \alpha)}\hat{\psi}(n)$. Thus (\ref{Kipnis_Varadhan}) leads to the condition
\begin{equation}\label{condition2}
\sum_{n\in\mathbb Z \setminus \{0\}} \frac{|\hat{\varphi}(n)|^2}{1-\cos(2\pi\alpha n)}<\infty,
\end{equation}
weaker than (\ref{condition1}). Moreover, \cite{Kipnis_Varadhan_1986} (see (1.1) therein) delivers a formula for $\sigma$, which reads here as
$$\sigma^2=\sum_{n\in \mathbb Z \setminus \{0\}} \frac{1+\cos(2\pi \alpha n)}{1-\cos(2\pi \alpha n)} |\hat{\varphi}(n)|^2.$$
Clearly, $\sigma^2<\infty$ if (\ref{condition2}) is satisfied and $\sigma>0$ if $\varphi$ is non-constant.

We are in position to prove Proposition \ref{P:1}. We recall the statement for the convenience of the reader.

\setcounter{prop}{0}

\begin{prop}
Let us assume $\alpha$ to be Diophantine of type $(c,\gamma)$, $\gamma \ge 2$. If a non-constant function $\varphi \in C^{r}$, $r>\gamma-1/2$ (possibly $r=\infty$), is such that $\int\varphi(x)dx=0$ then there exists $\sigma>0$ such that
$$\frac{\varphi(Y^\alpha_1)+\cdots+\varphi(Y^\alpha_n)}{\sqrt{n}} \Rightarrow \mathcal N (0, \sigma).$$
In particular, \CLT holds if $\alpha$ is Diophantine of an arbitrary type and $\varphi$ is smooth.
\end{prop}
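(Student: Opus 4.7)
The plan is to verify the Kipnis--Varadhan condition (\ref{condition2}) and then read off $\sigma>0$ from the explicit formula already displayed. Everything reduces to the arithmetic interplay between the decay of $\hat\varphi(n)$ and a lower bound on $1-\cos(2\pi\alpha n)$.

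First I would extract a lower bound on $\|n\alpha\|:=\min_{m\in\mathbb Z}|n\alpha-m|$ from the Diophantine hypothesis. For $n\in\mathbb Z\setminus\{0\}$, pick the nearest integer $m$ to $n\alpha$; then $|\alpha-m/n|=\|n\alpha\|/|n|$, so (\ref{diophantine}) gives $\|n\alpha\|\ge c/|n|^{\gamma-1}$. Combined with the elementary inequality $|\sin(\pi x)|\ge 2\|x\|$ (for real $x$, with $\|x\|$ the distance to the nearest integer), this yields
$$1-\cos(2\pi\alpha n)=2\sin^2(\pi \alpha n)\ge 8\|n\alpha\|^2\ge \frac{8c^2}{|n|^{2(\gamma-1)}}.$$

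Next I would use smoothness. Integration by parts $r$ times (for integer $r$; for non-integer $r$ one can reduce to $\lfloor r\rfloor$ with Hölder continuity of the top derivative, or argue via the Sobolev characterisation $\varphi\in H^s$ whenever $\varphi\in C^s$ for suitable $s$) gives $|\hat\varphi(n)|\le K/|n|^{r}$ for some $K>0$. Plugging both estimates into the Kipnis--Varadhan sum,
$$\sum_{n\in\mathbb Z\setminus\{0\}}\frac{|\hat\varphi(n)|^2}{1-\cos(2\pi \alpha n)}\le \frac{K^2}{8c^2}\sum_{n\in\mathbb Z\setminus\{0\}} |n|^{2(\gamma-1)-2r},$$
and the right-hand sum converges precisely when $2(\gamma-1)-2r<-1$, i.e.\ $r>\gamma-1/2$, which is our hypothesis. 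Hence (\ref{condition2}) holds and \CLT follows from the Kipnis--Varadhan theorem.

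Finally, to get $\sigma>0$ I would use the formula displayed just before the proposition. Since $\alpha$ is irrational, $n\alpha$ is never a half-integer for $n\in\mathbb Z\setminus\{0\}$, so $1+\cos(2\pi\alpha n)>0$ for every such $n$; as $\varphi$ is non-constant with zero mean, at least one $\hat\varphi(n)$ with $n\ne 0$ is non-zero, making the series strictly positive. There is really no substantive obstacle here beyond bookkeeping; the main delicate point is making sure the Fourier-decay bound from $C^{r}$ is sharp enough for non-integer $r$, which is why the condition reads $r>\gamma-1/2$ rather than $r\ge \gamma-1/2$.
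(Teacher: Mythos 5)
Your proposal is correct and follows essentially the same route as the paper: verify the Kipnis--Varadhan condition (\ref{condition2}) by combining the Diophantine lower bound $\|n\alpha\|\ge c/|n|^{\gamma-1}$ with the Fourier decay $|\hat\varphi(n)|\lesssim |n|^{-r}$, then read off $\sigma>0$ from the explicit variance formula. The only superficial differences are that you use the sharp elementary inequality $|\sin(\pi x)|\ge 2\|x\|$ where the paper invokes a Taylor expansion with an unnamed constant, and your positivity argument for $\sigma$ works directly from the series formula rather than via the martingale/Jensen route sketched elsewhere in the paper.
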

\begin{proof}
We are going to prove (\ref{condition2}) is satisfied. Fix $\alpha$ and $\varphi$ as above. Clearly $\sum_{n\in \mathbb Z} |\hat{\varphi}(n)|^2<\infty$ since $\varphi$ is square integrable, therefore the problem is when $\cos(2\pi \alpha n)$ is close to 1, which happens exactly when $\alpha n$ is close to some integer. To handle this we will use the fact that $\alpha$ is Diophantine of type $(c, \gamma)$. This means
\begin{equation}\label{E:5.1}
\bigg|\alpha - \frac{p}{n}\bigg| \ge \frac{c}{n^\gamma} \quad \textrm{for all $p, n\in \mathbb Z$, $n\not = 0$.}
\end{equation}
By Taylor's formula $|\cos(2 \pi (p+x))-1|=\frac{(2\pi x)^2}{2}+o(x^2)$ for $p\in \mathbb Z$. As a consequence there exists $\eta>0$ such that 
$$\big|\cos(2 \pi \alpha n)-1 \big|\ge  2\pi \eta|n\alpha - p|^2 \ge \frac{2\pi \eta c^2}{n^{2(\gamma-1)}}$$
for an arbitrary $n \in\mathbb Z$. If $\varphi \in C^r$ then $\hat{\varphi}(n)\le C|n|^{-r}$ for some constant $C$ thus
$$ \frac{|\hat{\varphi}(n)|^2}{1-\cos(2\pi\alpha n)} \le \frac{C^2}{2\pi \eta c^2} |n|^{-2r+2(\gamma-1)}$$
for every $n$. It is immediate that if $r>\gamma-\frac 12$, then the series (\ref{condition2}) is convergent. This implies \CLT by Theorem 1.3 in \cite{Kipnis_Varadhan_1986}.
\end{proof}

Clearly, if $\varphi$ is a trigonometric polynomial, then series (\ref{condition2}) becomes a finite sum and thus the condition is trivially satisfied. This yields another proposition, which will be used in the proof of Theorem \ref{T:2}.

\begin{prop}
\label{P:2}
Let us assume $\alpha$ to be irrational. If $\varphi$ is a non-constant trigonometric polynomial with $\int\varphi(x)dx=0$ then there exists $\sigma>0$ such that
$$\frac{\varphi(Y^\alpha_1)+\cdots+\varphi(Y^\alpha_n)}{\sqrt{n}} \Rightarrow \mathcal N (0, \sigma).$$
\end{prop}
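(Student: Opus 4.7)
The plan is to simply verify the Kipnis--Varadhan condition (\ref{condition2}) and apply Theorem 1.3 of \cite{Kipnis_Varadhan_1986} together with the explicit variance formula displayed just after (\ref{condition2}). The whole content of the proposition is that the finite-sum structure of a trigonometric polynomial makes the condition trivial as soon as one knows the denominators do not vanish.

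Concretely, since $\varphi$ is a trigonometric polynomial with $\int\varphi=0$, only finitely many Fourier coefficients $\hat\varphi(n)$, $n\ne 0$, are nonzero. Hence the series in (\ref{condition2}) reduces to a finite sum. The only thing to check is that no term is infinite, i.e.\ that $1-\cos(2\pi\alpha n)>0$ for every $n\ne 0$ in the support of $\hat\varphi$. But $1-\cos(2\pi\alpha n)=0$ iff $\alpha n\in\mathbb Z$, which for $n\ne 0$ contradicts the irrationality of $\alpha$. Thus (\ref{condition2}) holds and Kipnis--Varadhan yields the central limit theorem with some $\sigma\ge 0$.

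It remains to verify $\sigma>0$. From the formula
$$\sigma^2=\sum_{n\in\mathbb Z\setminus\{0\}}\frac{1+\cos(2\pi\alpha n)}{1-\cos(2\pi\alpha n)}|\hat\varphi(n)|^2,$$
which is again a finite sum, each term is non-negative because for $n\ne 0$ one has $\cos(2\pi\alpha n)\in(-1,1)$: the value $-1$ would force $2\alpha n$ to be an odd integer, again contradicting the irrationality of $\alpha$. Since $\varphi$ is non-constant with mean zero, there exists $n_0\ne 0$ with $\hat\varphi(n_0)\ne 0$, and the corresponding summand is strictly positive, so $\sigma>0$.

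There is no real obstacle here; the only point of substance is the observation that irrationality of $\alpha$ rules out both $\cos(2\pi\alpha n)=1$ (needed for the Kipnis--Varadhan condition) and $\cos(2\pi\alpha n)=-1$ (needed for $\sigma>0$). Everything else is a direct specialization of the analysis carried out in the proof of Proposition~\ref{P:1}.
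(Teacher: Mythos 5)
Your proposal is correct and follows the same route as the paper: the paper dispenses with Proposition~\ref{P:2} in a single sentence by noting that for a trigonometric polynomial the series in (\ref{condition2}) reduces to a finite sum, and then invokes the Kipnis--Varadhan machinery and variance formula already set up in Section~\ref{S:3} (including the earlier remark that $\sigma>0$ for non-constant $\varphi$). You simply make explicit the two small facts the paper leaves implicit, namely that irrationality of $\alpha$ rules out $\cos(2\pi\alpha n)=1$ (so no denominator vanishes) and $\cos(2\pi\alpha n)=-1$ (so the single nonzero summand in the variance formula is strictly positive rather than merely non-negative); both observations are sound.
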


\section{Auxiliary results}
In the proofs three lemmas will be pivotal. Given integer $q\ge 1$, $\eta\in (0,1/2)$, define $G_q^\eta$ to be the subset of $\mathbb S^1$ containing all points whose distance from the set $\{ 0, \frac{1}{q}, \ldots, \frac{ q-1}{q} \}$ (where $\cos(2\pi q x)$ attains value 1) is less than $\frac {\eta}{q}$. Clearly $\textrm{Leb}(G_q^\eta)=2\eta$ whatever $q$ is. Recall that $(Y^{\alpha}_n)$ stands for the Markov process defined on some probability space $(\Omega, \mathcal F, \mathbb P)$ with transition function (\ref{E:1.1}) and $Y_1^\alpha \sim \textrm{Leb}$.

\begin{lem}\label{L:1}
Let $\alpha=\frac p q$, $\varphi(x)=2^{-q} \cos(2 \pi q x)$ and let $s\in (0,1)$. Let $N$ be an arbitrary natural number with $2^{-q-1}N^{1-s}>2$. If $\alpha'$ is sufficiently close to $\alpha$ then
$$\mathbb P \bigg(\frac{\varphi(Y_1^{\alpha'})+\cdots+\varphi(Y_N^{\alpha'})}{N^{s}} > 2 \bigg) > \frac{1}{6}.$$
\end{lem}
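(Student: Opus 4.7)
The key idea is that when $\alpha = p/q$ is rational, the observable $\varphi(x) = 2^{-q}\cos(2\pi q x)$ is \emph{invariant} under the rotation by $\alpha$: for any $x$ and any $\varepsilon \in \{-1,+1\}$ one has $\cos(2\pi q(x + \varepsilon \alpha)) = \cos(2\pi q x + 2\pi \varepsilon p) = \cos(2\pi q x)$. Writing $Y_n^{\alpha} = Y_1 + \xi_n \alpha$ where $\xi_n = \varepsilon_1 + \cdots + \varepsilon_{n-1}$ is the simple symmetric random walk driving the chain, this means $\varphi(Y_n^{\alpha}) = \varphi(Y_1)$ for every $n$, so
\[
S_N(\alpha) := \varphi(Y_1^{\alpha}) + \cdots + \varphi(Y_N^{\alpha}) = N \cdot 2^{-q}\cos(2\pi q Y_1).
\]

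Now I would condition on $Y_1$ landing in $G_q^{1/6}$: on that event $\cos(2\pi q Y_1) \ge \cos(\pi/3) = 1/2$, hence
\[
\frac{S_N(\alpha)}{N^s} \ge 2^{-q-1}N^{1-s} > 2
\]
by the hypothesis on $N$. Since $Y_1 \sim \mathrm{Leb}$ and $\mathrm{Leb}(G_q^{1/6}) = 1/3$, the event in the lemma holds with probability at least $1/3$ when $\alpha' = \alpha$. It remains to transfer this to nearby $\alpha'$.

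For the perturbation step, the point is that $Y_n^{\alpha'}$ is a simple explicit function of $\alpha'$: on the probability space carrying the i.i.d.\ signs $(\varepsilon_i)$ and the initial point $Y_1$, one has $Y_n^{\alpha'} = Y_1 + \xi_n \alpha'$ with $|\xi_n| \le N$. Using $|\cos a - \cos b| \le |a-b|$ gives
\[
|\varphi(Y_n^{\alpha'}) - \varphi(Y_n^{\alpha})| \le 2^{-q} \cdot 2\pi q |\xi_n| \cdot |\alpha' - \alpha| \le 2\pi q \cdot 2^{-q} N |\alpha'-\alpha|,
\]
and hence $|S_N(\alpha') - S_N(\alpha)| \le 2\pi q\cdot 2^{-q} N^2 |\alpha'-\alpha|$ uniformly in $\omega$. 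Setting $M := 2^{-q-1}N^{1-s} > 2$, I would pick $\alpha'$ close enough to $\alpha$ that $|S_N(\alpha') - S_N(\alpha)| < (M-2)N^s$ pointwise; then on the event $\{Y_1 \in G_q^{1/6}\}$ we still have $S_N(\alpha')/N^s > 2$.

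This gives $\mathbb P\big(S_N(\alpha')/N^s > 2\big) \ge \mathrm{Leb}(G_q^{1/6}) = 1/3 > 1/6$, which is stronger than what is claimed. The whole argument is essentially a one-line observation (rational $\alpha$ freezes the cosine) followed by a trivial uniform continuity estimate in $\alpha'$; I do not see a genuine obstacle here. The slack between $1/3$ and $1/6$ in the statement is presumably there so that the same bound will survive the further perturbations needed in the later lemmas, where one must also estimate things like $\|(Y_n^{\alpha'}) - (Y_n^{\alpha})\|$ after many iterations; but that issue does not affect this particular lemma.
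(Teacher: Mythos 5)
Your proof is correct and takes essentially the same approach as the paper: both exploit the invariance of $\varphi$ under rotation by $\alpha=p/q$, condition on the starting point landing near the maxima of $\cos(2\pi q x)$, and then take $\alpha'$ close enough to $\alpha$ that the sum is barely affected. The only cosmetic difference is in the perturbation step — the paper conditions on $Y_1 \in G_q^{1/12}$ (probability $1/6$) and picks $\alpha'$ so that the whole orbit remains in the larger set $G_q^{1/6}$, while you condition directly on $Y_1 \in G_q^{1/6}$ (probability $1/3$) and use a uniform Lipschitz bound $|S_N(\alpha')-S_N(\alpha)|\le 2\pi q\,2^{-q}N^2|\alpha'-\alpha|$, so you even obtain the stronger constant $1/3$ in place of $1/6$.
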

\noindent Note the assertion is more difficult to obtain when $s$ is close to 1.
\begin{proof}
The result is the consequence of the invariance of $\varphi$ under the action of the rotation of angle $\alpha$. In particular the set $G_q^\eta$ is invariant for every $\eta>0$. Take $N$ like in the statement, and choose $\alpha'$ so close to $\alpha$ that $x+n\alpha' \in G_q^{1/6}$ for $|n|\le N$ and $x\in G_q^{1/12}$. By the definition of $G_q^\eta$, the value of $\varphi$ on $G_q^{1/6}$ is greater or equal to $2^{-q}\cos(2\pi/6)\ge 2^{-q}\cdot 1/2$. Thus $\varphi(x+n\alpha')\ge 2^{-q}\cdot 1/2=2^{-q-1}$ for $|n|\le N$ and $x\in G_q^{1/12}$. This yields

$$\{ Y^{\alpha'}_1 \in G_q^{1/12} \} \subseteq \bigg\{ \frac{\varphi(Y_1^{\alpha'})+\cdots+\varphi(Y_N^{\alpha'})}{N} > 2^{-q-1} \bigg\}$$
$$ =  \bigg\{ \frac{\varphi(Y_1^{\alpha'})+\cdots+\varphi(Y_N^{\alpha'})}{N^{s}} > 2^{-q-1}N^{1-s} \bigg\}$$

\noindent Using the facts that $Y_1^{\alpha'}\sim \textrm{Leb}$, $\textrm{Leb}(G_q^{1/12})=1/6$ and $2^{-q-1}N^{1-s}>2$ we have
$$\mathbb P \bigg(\frac{\varphi(Y_1^{\alpha'})+\cdots+\varphi(Y_N^{\alpha'})}{N^{s}} > 2 \bigg)$$
$$\ge \mathbb P \bigg(  \frac{\varphi(Y_1^{\alpha'})+\cdots+\varphi(Y_N^{\alpha'})}{N^{s}} > 2^{-q-1}N^{1-s}  \bigg)
\ge \mathbb P (Y_1^{\alpha'} \in G_q^{1/12}) = \frac{1}{6},$$
which yields the assertion.
\end{proof}

A slightly different lemma is the following.
\begin{lem}\label{L:3}
Let $\alpha$ be an irrational number, $s\in (1/2, 1)$, $c>0$, $\gamma\ge 2$. If $\alpha$ satisfies
$$\bigg|\alpha - \frac p q \bigg|\le \frac{c}{q^\gamma}$$
for some pair of integers $p,q$, $q\not= 0$, then
$$\mathbb P \bigg( \frac{\varphi(Y_1^{\alpha})+\cdots+\varphi(Y_N^{\alpha})}{N^s} > \frac{\sqrt{2}}{2\cdot (16 c)^{1-s}} \bigg) > \frac{1}{8},$$
where $\varphi(x)=q^{-(\gamma-1)(1-s)}\cos (2\pi q x)$, $N=\lfloor\frac{q^{\gamma-1}}{16c}\rfloor$.
\end{lem}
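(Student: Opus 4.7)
The plan is to adapt the argument of Lemma 1 to the irrational case, exploiting the closeness of $\alpha$ to $p/q$. The crucial observation is that $\cos(2\pi q x)$ is \emph{exactly} invariant under rotation by any multiple of $p/q$, so what is needed is to control how far the actual $\alpha$-walk can drift from a genuine $p/q$-walk over $N$ steps.

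Writing $Y_n^\alpha = Y_1^\alpha + S_{n-1}\alpha$ with $S_{n-1} = \sum_{i=1}^{n-1}\epsilon_i$ the simple symmetric walk on $\mathbb Z$, one splits $S_{n-1}\alpha = S_{n-1}\cdot\frac{p}{q} + S_{n-1}(\alpha-\frac{p}{q})$. The first term is a multiple of $\frac{1}{q}$ and therefore preserves the distance $d(\cdot)$ to the finite set $\{0,\frac{1}{q},\ldots,\frac{q-1}{q}\}$ on which $\cos(2\pi q\cdot)$ attains $1$. The second satisfies, for every $n\le N$,
$$\bigl|S_{n-1}(\alpha-p/q)\bigr|\le N\cdot\frac{c}{q^\gamma}\le\frac{q^{\gamma-1}}{16c}\cdot\frac{c}{q^\gamma}=\frac{1}{16q}.$$
Consequently $d(Y_n^\alpha)\le d(Y_1^\alpha)+\frac{1}{16q}$ for all $n\le N$.

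Next I would take the good event $A=\{Y_1^\alpha\in G_q^{1/16}\}$, which has probability $\mathrm{Leb}(G_q^{1/16})=1/8$. On $A$ the bound above gives $d(Y_n^\alpha)<\frac{1}{8q}$, hence $Y_n^\alpha\in G_q^{1/8}$, so that $\cos(2\pi q Y_n^\alpha)>\cos(\pi/4)=\sqrt{2}/2$, i.e.\
$$\varphi(Y_n^\alpha)>q^{-(\gamma-1)(1-s)}\cdot\frac{\sqrt 2}{2}\quad\text{for every }n\le N.$$
Summing and dividing by $N^s$ yields
$$\frac{\varphi(Y_1^\alpha)+\cdots+\varphi(Y_N^\alpha)}{N^s}>\Bigl(\frac{N}{q^{\gamma-1}}\Bigr)^{1-s}\cdot\frac{\sqrt 2}{2},$$
and the choice $N=\lfloor q^{\gamma-1}/(16c)\rfloor$ makes $N/q^{\gamma-1}$ essentially equal to $1/(16c)$, delivering the lower bound $\sqrt 2/(2(16c)^{1-s})$.

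The main obstacle is a delicate bookkeeping of strict versus non-strict inequalities: the probability of $G_q^{1/16}$ equals $1/8$ on the nose, and the floor in the definition of $N$ introduces a slight loss $N/q^{\gamma-1}\le 1/(16c)$, so the strict inequalities in the statement must be extracted from the fact that $d(Y_1^\alpha)<1/(16q)$ \emph{strictly} on the open set $G_q^{1/16}$, giving a quantitative improvement $\cos(2\pi q Y_n^\alpha)>\sqrt 2/2$ that compensates the loss. Beyond this constant-chasing the argument is essentially a copy of the proof of Lemma~1 with the exact invariance of $\cos(2\pi q x)$ under $p/q$ replaced by its $\frac{1}{16q}$-approximate invariance under $\alpha$.
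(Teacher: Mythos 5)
Your proof is correct and follows essentially the same route as the paper: bound the drift $|S_{n-1}(\alpha-p/q)|\le N c/q^\gamma < 1/(16q)$ for $n\le N$, deduce that starting on the open set $G_q^{1/16}$ (of measure $1/8$) keeps the walk inside $G_q^{1/8}$ for $N$ steps, lower-bound $\cos(2\pi q\cdot)$ there by $\sqrt 2/2$, and conclude. The constant-chasing worry you flag at the end (the floor makes $N/q^{\gamma-1}\le 1/(16c)$, so strict $>$ must come from the open-set slack) is a genuine imprecision that the paper glosses over as well; it is harmless here since the specific constants play no role in the application, but you were right to notice it.
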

\begin{proof}
If $|\alpha - \frac p q|\le \frac{c}{q^\gamma}$ and $|k|\le \frac{q^{\gamma-1}}{16c}$ then
\begin{equation}\label{L:3.1}
\bigg|k\alpha- k\frac p q \bigg|\le |k| \frac{c}{q^\gamma} < \frac{1}{16q}.
\end{equation}
Thus $z+n\alpha \in G^{1/8}_q$ for all $z\in G^{1/16}_q$ and integers $n$ with $|n|\le N$. On the other hand, the value of $\varphi$ on $G^{1/8}_q$ is greater or equal to  $q^{-(\gamma-1)(1-s)}\cos(\frac{2 \pi}{8})=\frac{\sqrt{2}}{2}\cdot q^{-(\gamma-1)(1-s)}$. By the same reasoning as in the proof of Lemma \ref{L:1} we have
$$\{Y_1^\alpha \in G^{1/16}_q \} \subseteq \bigg\{  \frac{\varphi(Y_1^{\alpha})+\cdots+\varphi(Y_N^{\alpha})}{N^s} > \frac{\sqrt{2}}{2\cdot (16c)^{1-s}} \bigg\}$$
and consequently
$$\mathbb P \bigg( \frac{\varphi(Y_1^{\alpha})+\cdots+\varphi(Y_N^{\alpha})}{N^s} > \frac{\sqrt{2}}{2\cdot (16c)^{1-s}} \bigg)\ge \mathbb P (Y_1^\alpha\in G^{1/16}_q) =\frac{1}{8}.$$

\end{proof}

Take $\alpha=p/q$ rational ($p/q$ is in the irreducible form) and the corresponding process $(Y_n^\alpha)$. If the initial point $Y_1^\alpha$ is already known, then we know also each $Y^\alpha_n$, $n\in \mathbb N$, is contained almost surely in the orbit of $Y_1^\alpha$ under the action of the rotation of angle $\alpha$, $\{Y_1^\alpha, Y_1^\alpha+\alpha, \ldots, Y_1^\alpha+(q-1)\alpha\}$ (this set is finite, since $\alpha$ is rational). The process $(Y^\alpha_n)$ can be therefore treated as a finite state Markov chain.

 If $q$ is odd, then the process $(Y^\alpha_n)$ treated as a finite state Markov chain is aperiodic and irreducible. Its stationary distribution the uniform distribution on the set $\{Y_1^\alpha, Y_1^\alpha+\alpha, \ldots, Y_1^\alpha+(q-1)\alpha\}$ (every state is of measure $1/q$). It follows from Theorem 8.9 (page 131) \cite{Billingsley_1995} that
\begin{equation}\label{E:exp.conv.}
|\mathbb P (Y^\alpha_n = Y_1^\alpha+i\alpha) - 1/q| \le A \rho^n \quad \textrm{for $i=0,1, \ldots, q-1$},
\end{equation}
where the constants $A$ and $\rho\in(0,1)$ are independent of $x$ (since neither the space nor the transition probabilities depend on $x$). Let $\varphi(x)=a\cos(2\pi q' x)$ for some $a>0$ and $q'$ not a multiplicity of $q$. Since $p/q$ is assumed to be in an irreducible form, $p/q\cdot q'$ is not an integer and thus we have
$$1/q\sum_{i=0}^{q-1} \varphi(x+i\alpha)=0$$
for every $x\in \mathbb S^1$, which is equivalent to say that the integral of $\varphi$ with respect to the stationary distribution of $(Y_n^\alpha)$ (treated as a finite state Markov chain) equals zero. Moreover, using (\ref{E:exp.conv.}) gives
$$\bigg| \mathbb E\big( \varphi(Y^\alpha_n) \ \big| \ Y^\alpha_1 \big) \bigg|
= \bigg| \sum_{i=0}^{q-1} \mathbb P ( Y^\alpha_n = Y^\alpha_1+i\alpha  ) \cdot \varphi(Y^\alpha_1+i\alpha) - 1/q\sum_{i=0}^{q-1} \varphi(Y^\alpha_1+i\alpha ) \bigg|$$
$$\le \sum_{i=0}^{q-1} \|\varphi\|_\infty \big|  \mathbb P ( Y^\alpha_n = Y^\alpha_1+i\alpha  ) - 1/q \big|
\le A q \|\varphi \|_\infty \rho^n$$
for $n\ge 1$. Thus
\begin{equation}\label{E:3.1}
\sum_{n=1}^\infty \bigg| \mathbb E\big( \varphi(Y^\alpha_n) \ \big| \ Y^\alpha_1 \big) \bigg| \le \frac{A q \|\varphi \|_\infty}{1-\rho} \quad \textrm{a.s.}
\end{equation}
The next lemma is essentially the consequence of the central limit theorem for finite state irreducible and aperiodic Markov chains. However, using (\ref{E:3.1}) we may deduce it in simpler way.

\begin{lem}\label{L:2}
Let $\alpha=\frac p q$ be rational (in irreducible form), $q$-odd. Let  $\varphi(x)=a\cos(2\pi q' x)$ for some $a>0$ and $q'$ not a multiplicity of $q$. If $s>1/2$ then for every $\varepsilon>0$ and $\delta>0$ there exists $N$ such that
$$\mathbb P \bigg( \frac{\big| \varphi(Y^\alpha_1)+\cdots+\varphi(Y^\alpha_n)\big|}{n^s} > \delta \bigg) < \varepsilon$$
for $n \ge N$.
\end{lem}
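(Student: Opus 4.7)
The plan is to use the exponential mixing bound $|T^{n-1}\varphi(x)| \leq Aq\|\varphi\|_\infty \rho^n$ (with constants $A$ and $\rho\in(0,1)$ independent of $x$), which is precisely what is established just before (\ref{E:3.1}), to solve the Poisson equation $T\psi - \psi = \varphi$, and then to apply the standard martingale decomposition of Section~\ref{S:3}. Concretely, I would set
$$\psi(x) := -\sum_{n=0}^\infty T^n \varphi(x);$$
the geometric bound makes this series converge uniformly to a bounded function, and a one-line computation of $T\psi-\psi$ telescopes to $\varphi$.

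Next, following (\ref{E:P1.1}) verbatim, I would decompose
$$\sum_{k=1}^n \varphi(Y_k^\alpha) \;=\; M_n + R_n, \quad\text{with}\quad M_n := \sum_{k=1}^{n-1}\bigl[T\psi(Y_k^\alpha) - \psi(Y_{k+1}^\alpha)\bigr], \quad R_n := T\psi(Y_n^\alpha) - \psi(Y_1^\alpha).$$
By the Markov property (\ref{E:dual}), $(M_n)$ is a martingale with respect to the natural filtration of $(Y_n^\alpha)$, its increments are bounded in absolute value by $2\|\psi\|_\infty$, and hence $\mathrm{Var}(M_n) \leq 4\|\psi\|_\infty^2\, n$. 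The remainder satisfies $|R_n| \leq 2\|\psi\|_\infty$ deterministically.

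The assertion would then follow from Chebyshev's inequality: for any fixed $\delta>0$,
$$\mathbb{P}\!\left(\frac{|M_n|}{n^s} > \frac{\delta}{2}\right) \leq \frac{16\|\psi\|_\infty^2}{\delta^2\, n^{2s-1}} \longrightarrow 0 \quad\text{as } n\to\infty,$$
since $2s-1 > 0$, while $|R_n|/n^s \to 0$ deterministically; the two estimates combined give the required smallness for all $n$ sufficiently large. The argument is essentially routine once the bounded Poisson solution $\psi$ is in hand; the only point worth double-checking is that the decay of $T^n\varphi$ is genuinely \emph{pointwise} (not merely in $L^2$), which the finite-state structure and the oddness of $q$ provide via the estimate preceding (\ref{E:3.1}).
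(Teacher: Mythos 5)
Your proof is correct, and it reaches the same quantitative bound as the paper by a closely related but organizationally different route. Both arguments rest on Chebyshev's inequality and on the uniform exponential decay of $T^n\varphi$ (i.e.\ the estimate preceding (\ref{E:3.1}), which holds pointwise because the induced finite-state chain is irreducible and aperiodic when $q$ is odd and $q'$ is not a multiple of $q$). You use that decay to sum the geometric series $\psi=-\sum_{n\ge 0}T^n\varphi$, giving a \emph{bounded} solution of the Poisson equation, and then apply the martingale-plus-boundary decomposition of Section~\ref{S:3}: orthogonality of martingale increments plus $\|\psi\|_\infty<\infty$ give $\mathbb E[M_n^2]\le 4\|\psi\|_\infty^2\,n$, and Chebyshev finishes. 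The paper instead applies Chebyshev directly to $\sum_k\varphi(Y_k^\alpha)$, expands $\mathbb E\big(\sum_k\varphi(Y_k^\alpha)\big)^2$, and controls the cross terms $\mathbb E[\varphi(Y_i^\alpha)\varphi(Y_j^\alpha)]$ using (\ref{E:3.1}); this avoids introducing $\psi$ at all. The two computations are morally the same — your $\|\psi\|_\infty$ and the paper's geometric sum of $\|T^n\varphi\|_\infty$ are the same object — and both yield $\mathbb E[(\text{sum})^2]=O(n)$, hence a tail bound of order $n^{1-2s}\to 0$. Your version makes the $O(n)$ variance conceptually transparent by exhibiting the bounded coboundary explicitly; the paper's version is a bit shorter since it never needs to verify convergence of the Poisson series.
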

\begin{proof}
It follows from the Chebyshev inequality. We have
$$ \mathbb P \bigg( \frac{\big|\varphi(Y^\alpha_1)+\cdots+\varphi(Y^\alpha_n)\big|}{n^s} > \delta \bigg) 
\le \frac{\mathbb E \big( \varphi(Y^\alpha_1)+\cdots+\varphi(Y^\alpha_n) \big)^2}{\delta^2 n^{2s}} $$
$$= \frac{ \big|  \mathbb E \big(\sum_{i=1}^n \varphi^2(Y_i^\alpha) + 2\sum_{i=1}^{n-1} \varphi(Y^\alpha_i)(\varphi(Y^\alpha_{i+1})+\cdots+\varphi(Y^\alpha_n) ) \big)\big|}{\delta^2 n^{2s}}$$
$$\le \sum_{i=1}^n \frac{\mathbb E\varphi^2(Y_i^\alpha)}{\delta^2 n^{2s}} + 2\sum_{i=1}^n \frac{\big| \mathbb E \big(\varphi(Y^\alpha_i) \mathbb E[ \varphi(Y^\alpha_{i+1})+\cdots+\varphi(Y^\alpha_n) | Y^\alpha_i ]\big)\big|}{\delta^2n^{2s}}$$
$$=\frac{1}{\delta^2 n^{2s-1}} \int\varphi^2(x)dx + 2\sum_{i=1}^n \frac{\big| \mathbb E \varphi(Y^\alpha_i) \big|\cdot \big| \mathbb E[ \varphi(Y^\alpha_{i+1})+\cdots+\varphi(Y^\alpha_n) | Y^\alpha_i ]\big|}{\delta^2n^{2s}}.$$
$$\le\frac{1}{\delta^2 n^{2s-1}} \int\varphi^2(x)dx + 2 \sum_{i=1}^n \frac{\big| \mathbb E \varphi(Y^\alpha_i) \big|\cdot \bigg( \big|\mathbb E[ \varphi(Y^\alpha_{i+1})| Y^\alpha_i ]\big|  +\cdots+ \big|\mathbb E[ \varphi(Y^\alpha_n) | Y^\alpha_i ]\big|\bigg)}{\delta^2n^{2s}}.$$
By (\ref{E:3.1})  and the stationarity of the process each of the numerators in the sum does not exceed $\frac{2A q \|\varphi \|^2_\infty}{1-\rho}$, thus the second term is bounded by
$$n\cdot \frac{2A q \|\varphi \|^2_\infty}{(1-\rho)\delta^2n^{2s}}=\frac{2A q \|\varphi \|^2_\infty}{(1-\rho)\delta^2n^{2s-1}}.$$
The entire expression tends to zero since $s>1/2$. The assertion follows.
\end{proof}

\section{Proof of Theorem \ref{T:1}}

Fix an arbitrary $s\in (\frac 1 2, 1)$. We are going to construct an angle $\alpha$ and an observable $\varphi$ with $\int\varphi(x)dx=0$ such that there exist infinitely many $n$'s with
$$\mathbb P \bigg( \frac{\varphi(Y_1^\alpha)+\cdots+\varphi(Y_n^\alpha)}{n^{s}} > 1 \bigg) > \frac{1}{12}.$$
Consequently the process does not satisfy \CLT since \CLT would imply the above quantity tends to zero. First we shall define inductively a sequence of numbers $\alpha_k$ convergent to some $\alpha$ along with certain observables $\varphi_k$. Then we will put $\varphi=\sum_k \varphi_k$ and use some relations between $\alpha_k$ and $\varphi_k$ established during the induction process to get the above assertion.

Put $\alpha_1=\frac 1 3=\frac {p_1}{q_1}$ (when we represent a rational number as a fraction of integers we always assume it to be in an irreducible form, so here $p_1=1$ and $q_1=3$), and set $\varphi_1(x)=2^{-q_1} \cos(2\pi q_1 x)$. Take $N_1$ so large that $2^{-q_1-1}N_1^{1-s}>2$ and apply Lemma \ref{L:1} to obtain an angle $\alpha_2=\frac{p_2}{q_2}$, with $q_2>q_1$ and $q_2$ odd, such that
\begin{equation}\label{E:4.1}
\mathbb P \bigg( \frac{\varphi_1(Y_1^{\alpha_2})+\cdots+\varphi_1(Y_{N_1}^{\alpha_2})}{N_1^{s}} >2 \bigg) >\frac 1 6.
\end{equation}
Define $\varphi_2(x)=2^{-q_2}\cos(2\pi q_2 x)$. Take $N_2>N_1$ so large that $2^{-q_2-1} N_2^{1-s}>2$. Clearly $q_1$ is not a multiplicity of $q_2$, hence by Lemma \ref{L:2} we can assume that $N_2$ is so large that
\begin{equation}\label{E:4.11}
\mathbb  P \bigg( \frac{\big|\varphi_1(Y_1^{\alpha_2})+\cdots+\varphi_1(Y_{N_2}^{\alpha_2})\big|}{N_2^{s}} >\frac 1 4 \bigg) < \frac 1 4 \cdot \frac{1}{6}.\end{equation}
Again use Lemma \ref{L:1} to obtain an angle $\alpha_3=\frac{p_3}{q_3}$, with $q_3>q_2$ and $q_3$ odd, such that
\begin{equation}\label{E:4.13}
\mathbb  P \bigg( \frac{\varphi_2(Y_1^{\alpha_3})+\cdots+\varphi_2(Y_{N_2}^{\alpha_3})}{N_2^{s}} > 2 \bigg) > \frac{1}{6}.
\end{equation}
We assume also the number $\alpha_3$ is so close to $\alpha_2$ that (\ref{E:4.1}) and (\ref{E:4.11}) still hold with $\alpha_2$ replaced by $\alpha_3$. This combined with (\ref{E:4.13}) gives
$$\mathbb P \bigg( \frac{\varphi_i(Y_1^{\alpha_3})+\cdots+\varphi_i(Y_{N_i}^{\alpha_3})}{N_i^{s}} >2 \bigg) > \frac 1 6, \quad \textrm{for $i=1,2$,}$$
and
$$\mathbb  P \bigg( \frac{\big|\varphi_1(Y_1^{\alpha_3})+\cdots+\varphi_1(Y_{N_2}^{\alpha_3})\big|}{N_2^{s}} >\frac 1 4 \bigg) < \frac 1 4 \cdot \frac{1}{6}.$$

Assume $\alpha_k=\frac{p_k}{q_k}$, $N_i$, $\varphi_i$ are already defined, $k\ge 3$, $i<k$. These objects satisfy the relations
\begin{equation}\label{E:4.51}
\mathbb  P \bigg( \frac{\big|\varphi_i(Y_1^{\alpha_{k}})+\cdots+\varphi_i(Y_{N_j}^{\alpha_{k}})\big|}{N_j^{s}} >\frac{1}{4^i} \bigg) < \frac{ 1}{4^i} \cdot \frac{1}{6} \quad \textrm{for $j=1,\ldots, k-1$, $i<j$,}
\end{equation}
and
\begin{equation}\label{E:4.41}
\mathbb  P \bigg( \frac{\varphi_i(Y_1^{\alpha_{k}})+\cdots+\varphi_i(Y_{N_i}^{\alpha_{k}})}{N_i^{s}} > 2 \bigg) > \frac{1}{6} \quad \textrm{for $i=1,\ldots, k-1$.}
\end{equation}

\noindent Define $\varphi_k(x)=2^{-q_k} \cos(2 \pi q_k x)$ and take $N_k>N_{k-1}$ so large that $2^{-q_k-1} N_k^{1-s}>2$ and
\begin{equation}\label{E:4.2}
\mathbb  P \bigg( \frac{\big|\varphi_i(Y_1^{\alpha_k})+\cdots+\varphi_i(Y_{N_k}^{\alpha_k})\big|}{N_k^{s}} >\frac{1}{4^i} \bigg) < \frac{ 1}{4^i} \cdot \frac {1}{6}
\end{equation}
for $i=1,\ldots, k-1$, by Lemma \ref{L:2}. Use Lemma \ref{L:1} to get a number $\alpha_{k+1}=\frac{p_{k+1}}{q_{k+1}}$, with $q_{k+1}>q_k$, $q_{k+1}$ odd, such that
\begin{equation}\label{E:4.3}
\mathbb  P \bigg( \frac{\varphi_k(Y_1^{\alpha_{k+1}})+\cdots+\varphi_k(Y_{N_k}^{\alpha_{k+1}})}{N_k^{s}} > 2 \bigg) > \frac{1}{6}.
\end{equation}
We should take care that $\alpha_{k+1}$ is so close to $\alpha_k$ that  (\ref{E:4.51}), (\ref{E:4.41}) and (\ref{E:4.2}) still hold with $\alpha_k$ replaced by $\alpha_{k+1}$. With this modification, (\ref{E:4.51}) and (\ref{E:4.2}) become
\begin{equation}\label{E:4.5}
\mathbb  P \bigg( \frac{\big|\varphi_i(Y_1^{\alpha_{k+1}})+\cdots+\varphi_i(Y_{N_j}^{\alpha_{k+1}})\big|}{N_j^{s}} >\frac{1}{4^i} \bigg) < \frac{ 1}{4^i} \cdot \frac{1}{6} \quad \textrm{for $j=1,\ldots, k$, $i<j$}.
\end{equation}
while (\ref{E:4.41}) and (\ref{E:4.3}) can be rewritten as
\begin{equation}\label{E:4.4}
\mathbb  P \bigg( \frac{\varphi_i(Y_1^{\alpha_{k+1}})+\cdots+\varphi_i(Y_{N_i}^{\alpha_{k+1}})}{N_i^{s}} > 2 \bigg) > \frac{1}{6} \quad \textrm{for $i=1,\ldots, k$.}
\end{equation}
This completes the induction. Observe there is no inconsistency in assuming that $q_{k+1}$'s grow so fast that
\begin{equation}\label{E:4.8}
2^{-q_{k+1}} N_i^{1-s}<4^{-(k-i)} \quad \textrm{for $i=1,\ldots k$.}
\end{equation}

This way the sequences of numbers $(\alpha_k)$, $(N_k)$ and functions $(\varphi_k)$ are defined. Set $\alpha=\lim_{k\to \infty} \alpha_k$ and $\varphi=\sum_{k=1}^\infty \varphi_k$.
When passing to the limit, inequality (\ref{E:4.5}) becomes
\begin{equation}\label{E:4.6}
\mathbb  P \bigg( \frac{\big|\varphi_i(Y_1^{\alpha})+\cdots+\varphi_i(Y_{N_j}^{\alpha})\big|}{N_j^{s}} \ge \frac{1}{4^i} \bigg) \le \frac{ 1}{4^i} \cdot \frac {1}{6} \quad \textrm{for $j>1$, $i<j$}.
\end{equation}
while (\ref{E:4.4}) yields
\begin{equation}\label{E:4.7}
\mathbb  P \bigg( \frac{\varphi_i(Y_1^{\alpha})+\cdots+\varphi_i(Y_{N_i}^{\alpha})}{N_i^{s}} \ge 2 \bigg) \ge \frac{1}{6} \quad \textrm{for $i\ge 1$.}
\end{equation}

The function $\varphi$ is analytic. Indeed, by design
$$\varphi(x) = \sum_{k=-\infty}^\infty c_k e^{2\pi i k x},$$
where $c_k= \|\varphi_j\|_\infty=2^{-q_j}$ if $|k|=q_j$ and zero otherwise. Thus the Fourier coefficients of $\varphi$ decay exponentially fast, which implies $\varphi$ to be analytic\footnote{Indeed, $\varphi$ is defined as a series on the circle, however by the exponential convergence it can be extended to some neighbourhood of the unit disc $\mathbb D$ in the complex plane $\mathbb{C}$. Then $\varphi$ becomes a sum of holomorphic functions convergent uniformly on compact subsets of the domain of $\varphi$. Theorem 10.28 (page 214) in \cite{Rudin_1987} implies $\varphi$ is holomorphic.}. Obviously $\int \varphi(x)dx=0$ by the Lebesgue convergence theorem. Observe also that (\ref{E:4.8}) combined with $\|\varphi_i\|_\infty=2^{-q_i}$ yield
\begin{equation}\label{E:4.9}
\sum_{i>k} \|\varphi_i\|_\infty N_k^{1-s}<\sum_{i=1}^\infty 4^{-i}=\frac 1 2.
\end{equation}

We are in position to complete the proof, i.e. to show that
$$\mathbb P \bigg( \frac{\varphi(Y_1^{\alpha})+\cdots+\varphi(Y_{N_k}^{\alpha})}{N_k^{s}} \ge 1 \bigg) \ge \frac {1}{12}$$
for every $k$. To this end fix $k$ and write
$$\frac{\varphi(Y_1^{\alpha})+\cdots+\varphi(Y_{N_k}^{\alpha})}{N_k^{s}} = \sum_{i\le k} \frac{\varphi_i(Y_1^{\alpha})+\cdots+\varphi_i(Y_{N_k}^{\alpha})}{N_k^{s}}$$
$$+ \sum_{i>k} \frac{\varphi_i(Y_1^{\alpha})+\cdots+\varphi_i(Y_{N_k}^{\alpha})}{N_k^{s}}.$$
From (\ref{E:4.9}) it easily follows that the absolute value of the second summand on the right-hand side is less than $\frac 1 2$ almost surely. Thus
$$\mathbb P \bigg ( \frac{\varphi(Y_1^{\alpha})+\cdots+\varphi(Y_{N_k}^{\alpha})}{N_k^{s}} \ge 1 \bigg )
\ge \mathbb P \bigg( \sum_{i\le k} \frac{\varphi_i(Y_1^{\alpha})+\cdots+\varphi_i(Y_{N_k}^{\alpha})}{N_k^{s}} \ge 3/2 \bigg)$$
$$ \ge \mathbb P \bigg ( \frac{\varphi_k(Y_1^{\alpha})+\cdots+\varphi_k(Y_{N_k}^{\alpha})}{N_k^{s}}\ge 2 \bigg )
-\sum_{i<k} \mathbb P \bigg ( \frac{\big|\varphi_i(Y_1^{\alpha})+\cdots+\varphi_i(Y_{N_k}^{\alpha})\big|}{N_k^{s}} \ge \frac{1}{4^i} \bigg). $$
By  (\ref{E:4.6}) and (\ref{E:4.7}) it follows that
$$\mathbb P \bigg ( \frac{\varphi(Y_1^{\alpha})+\cdots+\varphi(Y_{N_k}^{\alpha})}{N_k^{s}} \ge 1 \bigg ) \ge \frac{1}{6} - \sum_{i=1}^\infty\frac{1}{4^i} \cdot \frac{1}{6} = \frac{1}{12},$$
which is the desired assertion.

\section{Proof of Theorems \ref{T:2} and \ref{T:3}}
The entire section is devoted to the proof of Theorem \ref{T:2}. In the end we will give a short remark how to change the proof to get Theorem \ref{T:3}.

Fix an irrational $\alpha$ and numbers $c>0$, $\gamma\ge 2$ such that
\begin{equation}\label{E:6.1}
\bigg|\alpha - \frac{p}{q} \bigg| \le \frac{c}{q^\gamma}
\end{equation}
for infinitely many pairs $p,q \in \mathbb Z$, $q\not = 0$. Take $r$ to be the largest possible integer with $r<\frac{\gamma}{2}-\frac 3 2$. The function $s\longmapsto (\gamma-1)(1-s)-1$ is decreasing, $s\in [\frac 1 2, 1)$, and its value at $s=\frac 1 2$ is $\frac{\gamma}{2}-\frac 3 2$, thus by continuity we can choose $s>\frac 1 2$ such that $r<(\gamma-1)(1-s)-1$. For this choice of $s$ we are going to construct an observable $\varphi$ with $\int\varphi(x)dx=0$ such that
$$\mathbb P \bigg( \frac{\varphi(Y_1^{\alpha})+\cdots+\varphi(Y_n^{\alpha})}{n^s} >  \frac{\sqrt{2}}{4\cdot (16 c)^{1-s}} \bigg) > \frac{1}{16}$$
for infinitely many $n$'s. Consequently \CLT is violated.

Take arbitrary $p_1, q_1 \in \mathbb Z$, $q_1\not = 0$, satisfying (\ref{E:6.1}). Set $\varphi_1(x) = q_1^{-(\gamma-1)(1-s)}\cos(2 \pi q_1 x)$ and apply Lemma \ref{L:3} to get 
\begin{equation}\label{E:6.3}
\mathbb P \bigg( \frac{\varphi_1(Y_1^{\alpha})+\cdots+\varphi_1(Y_{N_1}^{\alpha})}{N_1^s} > \frac{\sqrt{2}}{2\cdot (16c)^{1-s}} \bigg) > \frac{1}{8},
\end{equation}
where $N_1=\lfloor\frac{q_1^{\gamma-1}}{16c}\rfloor$.
By Proposition \ref{P:2} the additive functional $(\varphi_1(Y^\alpha_1)+\cdots+\varphi_1(Y^\alpha_n))$ satisfies \textbf{CLT}, thus for $N$ sufficiently large
\begin{equation}\label{E:6.2}
\mathbb P \bigg( \frac{\varphi_1(Y_1^{\alpha})+\cdots+\varphi_1(Y_N^{\alpha})}{N^s} > \frac{\sqrt{2}}{4 \cdot (16c)^{1-s}}  \cdot \frac{1}{4} \bigg) < \frac{1}{8}\cdot\frac{1}{4}.
\end{equation}
Let us take $p_2, q_2\in \mathbb Z$, $q_2\not = 0$, such that (\ref{E:6.1}) holds, $N_2=\lfloor\frac{q_2^{\gamma-1}}{16c}\rfloor$ satisfies (\ref{E:6.2}) and
\begin{equation}
q_2^{-(\gamma-1)(1-s)} \cdot N_1^{1-s}<\frac{\sqrt{2}}{4 \cdot (16c)^{1-s}} \cdot \frac 1 4
\end{equation}
(this will imply that the inequality (\ref{E:6.3}) is not affected too much when $\varphi_1$ replaced by $\varphi_1+\varphi_2$). Lemma \ref{L:3} yields
$$\mathbb P \bigg( \frac{\varphi_2(Y_1^{\alpha})+\cdots+\varphi_2(Y_{N_2}^{\alpha})}{N_2^s} > \frac{\sqrt{2}}{2\cdot (16c)^{1-s}} \bigg) >\frac{1}{8}.$$

Assume $N_k$, $p_k$, $q_k$ are already defined. Let us choose a pair $q_{k+1}, p_{k+1} \in \mathbb Z$ with  (\ref{E:6.1}), where $q_{k+1}>q_k$ is so large that
\begin{equation}\label{E:6.5}
q_{k+1}^{-(\gamma-1)(1-s)} \cdot N_i^{1-s}< \frac{\sqrt{2}}{4\cdot (16c)^{1-s}} \cdot 4^{-(k-i)} \quad \textrm{for $i=1,\ldots, k$.}
\end{equation}
Moreover, using Lemma \ref{L:2} we demand that $q_{k+1}$ is so large that
\begin{equation}\label{E:6.4}
\mathbb P \bigg( \frac{\varphi_j(Y_1^{\alpha})+\cdots+\varphi_j(Y_{N_{k+1}}^{\alpha})}{N_{k+1}^s} > \frac{\sqrt{2}}{4 \cdot (16c)^{1-s}}\cdot \frac{1}{4^j} \bigg) < \frac{1}{8}\cdot\frac{1}{4^j} \quad \textrm{for $j \le k$,} 
\end{equation}
where $N_{k+1}=\lfloor\frac{q_{k+1}^{\gamma-1}}{16c}\rfloor$. Finally we use Lemma \ref{L:3} to get
\begin{equation}\label{E:6.6}
\mathbb P \bigg( \frac{\varphi(Y_1^{\alpha})+\cdots+\varphi(Y_N^{\alpha})}{N^s} > \frac{\sqrt{2}}{2\cdot (16 c)^{1-s}} \bigg) > \frac{1}{8},
\end{equation}
where $\varphi_{k+1}(x)=q_{k+1}^{-(\gamma-1)(1-s)}\cos (2\pi q_{k+1} x)$.
\noindent When the induction is complete put
$$\varphi(x)=\sum_{k=1}^\infty \varphi_k(x)=\sum_{k=1}^\infty q_k^{-(\gamma-1)(1-s)}\cos(2 \pi q_k x).$$
By assumption $r<(\gamma-1)(1-s)-1$, therefore we can take $\varepsilon>0$ so that $r=(\gamma-1)(1-s)-(1+\varepsilon)$. If one differentiates this series $r$ times, then it still converges uniformly (with the rate at least $q^{-(1+\varepsilon)}$). Therefore Theorem 7.17 (page 152) in \cite{Rudin_1976} yields $\varphi$ is $C^r$. 

Now it remains to show that
$$\mathbb P \bigg( \frac{\varphi(Y_1^{\alpha})+\cdots+\varphi(Y_{N_k}^{\alpha})}{N_k^s} >  \frac{\sqrt{2}}{4\cdot (16 c)^{1-s}} \bigg) > \frac{1}{16}$$
for every $k\in\mathbb N$. We proceed analogously to the proof of Theorem \ref{T:1}. Fix $k$. We have 
$$\frac{\varphi(Y_1^{\alpha})+\cdots+\varphi(Y_{N_k}^{\alpha})}{N_k^s} 
= \sum_{i\le k} \frac{\varphi_i(Y_1^{\alpha})+\cdots+\varphi_i(Y_{N_k}^{\alpha})}{N_k^s}$$
$$+\sum_{i> k} \frac{\varphi_i(Y_1^{\alpha})+\cdots+\varphi_i(Y_{N_k}^{\alpha})}{N_k^s}.$$
The application of (\ref{E:6.5}) yields the second term is bounded by $\frac{\sqrt{2}}{8\cdot (16 c)^{1-s}}$ a.s. Therefore
$$\mathbb P \bigg( \frac{\varphi(Y_1^{\alpha})+\cdots+\varphi(Y_{N_k}^{\alpha})}{N_k^s} >  \frac{\sqrt{2}}{4\cdot (16 c)^{1-s}} \bigg)$$
$$\ge \mathbb P \bigg( \sum_{i\le k} \frac{\varphi_i(Y_1^{\alpha})+\cdots+\varphi_i(Y_{N_k}^{\alpha})}{N_k^s} >  \frac{3\sqrt{2}}{8\cdot (16 c)^{1-s}} \bigg)$$
$$\ge \mathbb P \bigg( \frac{\varphi_k(Y_1^{\alpha})+\cdots+\varphi_k(Y_{N_k}^{\alpha})}{N_k^s} >  \frac{\sqrt{2}}{2\cdot (16 c)^{1-s}} \bigg)$$
$$-\sum_{i<k} \mathbb{P} \bigg( \frac{\varphi_i(Y_1^{\alpha})+\cdots+\varphi_i(Y_{N_k}^{\alpha})}{N_k^s} >   \frac{\sqrt{2}}{4\cdot (16 c)^{1-s}}\cdot \frac{1}{4^i} \bigg). $$
The application of (\ref{E:6.4}) and (\ref{E:6.6}) yields Theorem \ref{T:2}.

To demonstrate Theorem \ref{T:3} observe that for $\alpha$ Liouville there exist sequences of integers $p_k$, $q_k$ with
$$\bigg|\alpha - \frac{p_k}{q_k} \bigg| \le \frac{1}{q^k} \quad \textrm{for every $k$.}$$
The only difference with the proof of Theorem \ref{T:2} is that $p,q$ are chosen from this sequence. Then again $\varphi=\sum_m \varphi_m$, and the series is uniformly convergent after differentiating it $r$-times for an arbitrary $r$. This implies $\varphi\in C^\infty$. The rest remains unchanged.

\section{Acknowledgements}
This research was supported by the Polish National Science Centre grant Preludium UMO-2019/35/N/ST1/02363. I am grateful to Anna Zdunik for fruitful discussions and for sharing her notes with the proof of \CLT. I would also like to thank Corinna Ulcigrai for providing references \cite{Bromberg_Ulcigrai_2018, Sinai_Ulcigrai_2008}. I am grateful to two anonymous referees for many comments that helped improve the manuscript and for providing me reference \cite{Weber_2009}. Finally, I would like to thank Michael Lin for discussions and for pointing out that \cite{Kipnis_Varadhan_1986} applies here. 

\bibliographystyle{alpha}
\bibliography{CLT_rotations_JSP}

\begin{thebibliography}{dMvS93}

\bibitem[Bec10]{Beck_2010}
J.~Beck.
\newblock Randomness of the square root of 2 and the giant leap, {P}art 1.
\newblock {\em Period. Math. Hungar.}, 60(2):137--242, 2010.

\bibitem[Bec11]{Beck_2011}
J.~Beck.
\newblock Randomness of the square root of 2 and the giant leap, {P}art 2.
\newblock {\em Period. Math. Hungar.}, 62(2):127--246, 2011.

\bibitem[Bil95]{Billingsley_1995}
P.~Billingsley.
\newblock {\em Probability and measure}.
\newblock Wiley Series in Probability and Mathematical Statistics. John Wiley
  \& Sons, Inc., New York, third edition, 1995.
\newblock A Wiley-Interscience Publication.

\bibitem[Bor21]{Borda_2021}
B.~Borda.
\newblock Equidistribution of random walks on compact groups {II}. {T}he
  {W}asserstein metric.
\newblock {\em Bernoulli}, 27(4):2598--2623, 2021.

\bibitem[Bro71]{Brown_1971}
B.~M. Brown.
\newblock Martingale central limit theorems.
\newblock {\em Ann. Math. Statist.}, 42:59--66, 1971.

\bibitem[BU18]{Bromberg_Ulcigrai_2018}
M.~Bromberg and C.~Ulcigrai.
\newblock A temporal central limit theorem for real-valued cocycles over
  rotations.
\newblock {\em Ann. Inst. Henri Poincar\'{e} Probab. Stat.}, 54(4):2304--2334,
  2018.

\bibitem[DF14]{Dolgopyat_Fayad_2014}
D.~Dolgopyat and B.~Fayad.
\newblock Deviations of ergodic sums for toral translations {I}. {C}onvex
  bodies.
\newblock {\em Geom. Funct. Anal.}, 24(1):85--115, 2014.

\bibitem[DF20]{Dolgopyat_Fayad_2020}
D.~Dolgopyat and B.~Fayad.
\newblock Deviations of ergodic sums for toral translations {II}. {B}oxes.
\newblock {\em Publ. Math. Inst. Hautes \'{E}tudes Sci.}, 132:293--352, 2020.

\bibitem[DFS21]{Dolgopyat_Fayad_Saprykina_2021}
D.~Dolgopyat, B.~Fayad, and M.~Saprykina.
\newblock Erratic behavior for 1-dimensional random walks in a {L}iouville
  quasi-periodic environment.
\newblock {\em Electron. J. Probab.}, 26, 2021.

\bibitem[DL01]{Derriennic_Lin_2001}
Y.~Derriennic and M.~Lin.
\newblock The central limit theorem for {M}arkov chains with noral transition
  operators, started at a point.
\newblock {\em Probab. Theory Related Fields}, 119:508--528, 2001.

\bibitem[dMvS93]{deMelo_vanStrien_1993}
W.~de~Melo and S.~van Strien.
\newblock {\em One-Dimensional Dynamics}.
\newblock Springer Berlin, Heidelberg, 1993.

\bibitem[DS17]{Dolgopyat_Sarig_2017}
D.~Dolgopyat and O.~Sarig.
\newblock Temporal distributional limit theorems for dynamical systems.
\newblock {\em J. Stat. Phys.}, 166(3-4):680--713, 2017.

\bibitem[DS20]{Dolgopyat_Sarig_2020}
D.~Dolgopyat and O.~Sarig.
\newblock Quenched and annealed temporal limit theorems for circle rotations.
\newblock {\em Ast\'{e}risque}, (415, Quelques aspects de la th\'{e}orie des
  syst\`emes dynamiques: un hommage \`a Jean-Christophe Yoccoz. I):59--85,
  2020.

\bibitem[GL78]{Gordin_Lifsic_1978}
M.~I. Gordin and B.~A. Lif\v{s}ic.
\newblock Central limit theorem for stationary {M}arkov processes.
\newblock {\em Dokl. Akad. Nauk SSSR}, 239(4):766--767, 1978.

\bibitem[Hai06]{Hairer_2006}
M.~Hairer.
\newblock Ergodic properties of {M}arkov processes.
\newblock Lecture notes, 2006.
\newblock http://www.hairer.org/notes/Markov.pdf.

\bibitem[Kes60]{Kesten_1960}
H.~Kesten.
\newblock Uniform distribution {${\rm mod}\,1$}.
\newblock {\em Ann. of Math. (2)}, 71:445--471, 1960.

\bibitem[Kes62]{Kesten_1961}
H.~Kesten.
\newblock Uniform distribution {${\rm mod}\ 1$}. {II}.
\newblock {\em Acta Arith}, 7:355--380, 1961/1962.

\bibitem[KV86]{Kipnis_Varadhan_1986}
C.~Kipnis and S.~R.~S. Varadhan.
\newblock Central limit theorem for additive functionals of reversible {M}arkov
  processes and applications to simple exclusions.
\newblock {\em Comm. Math. Phys.}, 104(1):1--19, 1986.

\bibitem[Rud76]{Rudin_1976}
W.~Rudin.
\newblock {\em Principles of mathematical analysis}.
\newblock International Series in Pure and Applied Mathematics. McGraw-Hill
  Book Co., New York-Auckland-D\"{u}sseldorf, third edition, 1976.

\bibitem[Rud87]{Rudin_1987}
W.~Rudin.
\newblock {\em Real and complex analysis}.
\newblock McGraw-Hill Book Co., New York, third edition, 1987.

\bibitem[Sin99]{Sinai_1999}
Y.~Sinai.
\newblock Simple {R}andom {W}alks on {T}ori.
\newblock {\em Journal of Statistical Physics}, 94(3-4):695--708, 1999.

\bibitem[SU08]{Sinai_Ulcigrai_2008}
Y.~G. Sinai and C.~Ulcigrai.
\newblock A limit theorem for {B}irkhoff sums of non-integrable functions over
  rotations.
\newblock In {\em Geometric and probabilistic structures in dynamics}, volume
  469 of {\em Contemp. Math.}, pages 317--340. Amer. Math. Soc., Providence,
  RI, 2008.

\bibitem[SZ16]{Szarek_Zdunik_2016b}
T.~Szarek and A.~Zdunik.
\newblock Stability of iterated function systems on the circle.
\newblock {\em Bull. Lond. Math. Soc.}, 48(2):365--378, 2016.

\bibitem[Web09]{Weber_2009}
Michel Weber.
\newblock {\em Dynamical systems and processes}, volume~14 of {\em IRMA
  Lectures in Mathematics and Theoretical Physics}.
\newblock European Mathematical Society (EMS), Z\"{u}rich, 2009.

\bibitem[Zdu]{Zdunik_2017}
A.~Zdunik.
\newblock Personal communication.

\end{thebibliography}

\end{document}